\documentclass{amsart}
\usepackage{amssymb}

\usepackage[T1]{fontenc}
\usepackage{mathpazo,courier}
\usepackage[scaled]{helvet}
\usepackage{graphicx}

\usepackage{subfigure}
\usepackage[usenames,dvipsnames]{xcolor}
\definecolor{darkblue}{rgb}{0.0, 0.0, 0.55}
\usepackage[pagebackref,colorlinks,linkcolor=BrickRed,citecolor=OliveGreen,urlcolor=darkblue,hypertexnames=true]{hyperref}
\usepackage{enumerate}
\newtheorem{theorem}{Theorem}[section]
\newtheorem{lemma}[theorem]{Lemma}
\newtheorem{corollary}[theorem]{Corollary}

\newtheorem{proposition}[theorem]{Proposition}
\theoremstyle{definition}
\newtheorem{definition}[theorem]{Definition}
\newtheorem{example}[theorem]{Example}

\theoremstyle{remark}
\newtheorem{remark}[theorem]{Remark}

\numberwithin{equation}{section}
\usepackage{ushort}
\newcommand\X{\ushort X}
\newcommand\U{\ushort U}


\vfuzz2pt 

\hfuzz2pt 

\newcommand{\Real}{{\mathbb R}}
\newcommand{\C}{{\mathbb C}}

\newcommand{\Q}{\mathbb Q}
\newcommand{\R}{{\mathbb  R}}
\newcommand{\N}{{\mathbb N}}

\newcommand{\x}{\mathbf{x}}
\newcommand{\uu}{\mathbf{u}}
\newcommand{\y}{\mathbf{y}}
\newcommand{\z}{\mathbf{z}}

\newcommand {\ZZ} {{\rm Zer}}

\newcommand {\hide}[1]{}
\setlength\parindent{0pt}
\usepackage[applemac]{inputenc}
\begin{document}
\title[Irreducible components]
{On irreducible components of real exponential hypersurfaces}
\author{Cordian Riener}
\address{Aalto Science Institute, Aalto University, P.O. Box 13000, 00076 Aalto, Finland}
\address{Department of Mathematics and Statistics, Faculty of Science and Technology, University of Tromsø, 9037 Tromsø,Norway}
\email{cordian.riener@aalto.fi}
\author{Nicolai Vorobjov}
\address{Department of Computer Science, University of Bath, Bath
BA2 7AY, England, UK}
\email{nnv@cs.bath.ac.uk}
\keywords{Exponential-algebraic set, irreducible components, Schanuel's conjecture}
\subjclass[2010]{14P15, 11J81, 03C64}

\begin{abstract}
Fix any real algebraic extension $\mathbb K$ of the field $\Q$ of rationals.
Polynomials with coefficients from $\mathbb K$ in $n$ variables and in $n$ exponential functions
are called {\em exponential polynomials over $\mathbb K$}.
We study zero sets in $\Real^n$ of exponential polynomials over $\mathbb K$, which we call {\em exponential-algebraic sets}.
Complements of all exponential-algebraic sets in $\Real^n$ form a Zariski-type topology on $\Real^n$.
Let $P \in {\mathbb K}[X_1, \ldots ,X_n,U_1, \ldots ,U_n]$ be a polynomial and denote
$$V:=\{ (x_1, \ldots , x_n) \in \Real^n|\> P(x_1, \ldots ,x_n,, e^{x_1}, \ldots ,e^{x_n})=0 \}.$$
The main result of this paper states that, if the real zero set of a polynomial $P$ is irreducible over $\mathbb K$
and the exponential-algebraic set $V$ has codimension 1, then, under Schanuel's conjecture over the reals,
either $V$ is irreducible (with respect to the Zariski topology) or each of its irreducible components  of codimension 1
is a rational hyperplane through the origin.
The family of all possible hyperplanes is determined by monomials of $P$.
In  the case of a single exponential (i.e., when $P$ is independent of $U_2, \ldots , U_n$) stronger statements are
shown which are independent of Schanuel's conjecture.
\end{abstract}

\maketitle
\section{Introduction}

The main motivation of this paper is to begin a study of irreducible components of zero sets of functions
defined by expressions that are polynomial in variables and exponentials of variables.
The components are meant to be defined by the same type of expressions.
The interesting questions include: do the components have any special structure and what is an upper bound on their number.

Throughout this article we will  denote tuples of variables by
$\X:=(X_1,\dots,X_n)$ and $\U:=(U_1,\ldots,U_n)$, and with a given tuple $\X$ we associate the tuple of exponential functions
$e^{\X} := (e^{X_1}, \ldots ,e^{X_n})$.
We consider the field of real algebraic numbers $\Real_{alg}$, the field $\Q$ of rational numbers, and we fix
a real algebraic extension $\mathbb K$ of $\Q$.
Further,  ${\mathbb K}[\X,\U]:={\mathbb K}[X_1,\dots,X_n,U_1,\ldots,U_n]$ will denote the ring of polynomials with coefficients
in $\mathbb K$ in the $2n$  variables.
Clearly, ${\mathbb K}[\X,e^{\X}]$ is a ring of functions which we call the ring of \emph{exponential polynomials over $\mathbb K$}
(or \emph{$E$-polynomials}, for brevity).
The geometry and model theory of zero sets of $E$-polynomials are well understood
(see for example \cite{complexity, few, decidability}).
In the special case when $P$ is independent of variables $\X$, the $E$-polynomial is called {\em exponential sum}
(with integer spectrum).
The theory of zero sets of exponential sums is also developed, in particular in \cite{Kazar97, Zilber02},
but apparently not for the structure of their irreducible components.

Every $P\in {\mathbb K}[\X,\U]$ defines an $E$-polynomial $f$ via the map
$$E:\> {\mathbb K}[\X,\U] \to {\mathbb K}[\X,e^{\X}],$$
such that $f(\X)=E(P(\X, \U))=P(\X,e^{\X})$.

A finite set of polynomials $\mathcal{P}:=\{P_1,\ldots,P_k\}\subset {\mathbb K}[\X,\U]$ defines a real algebraic set
$$\ZZ(\mathcal{P}):=\{ (\x,\uu)=(x_1, \ldots ,x_n,u_1,\ldots,u_n) \in \Real^{2n}|\> P_1(\x,\uu)=\cdots=P_k(\x,\uu)=0  \},$$
and similarly, we will denote by $\ZZ(f_1,\ldots,f_l)\subset\R^n$ the zero set of a finite set of $E$-polynomials.
We will call any $\ZZ(f_1,\ldots,f_l)\subset\R^n$ a {\em (real) exponential-algebraic set} or just {\em exponential set},
for brevity.
By taking the sum of squares, every real algebraic set (respectively, every exponential set) can be defined as a zero set
of a single polynomial (respectively, $E$-polynomial).
An exponential set $V$ will be called reducible (over $\mathbb K$) if there are two distinct non-empty exponential sets
$V_1,\ V_2$ such that $V= V_1 \cup V_2$, and irreducible otherwise.
It will be shown in Section~\ref{sec:prelim} that every exponential set can be uniquely represented as a finite union
of irreducible exponential subsets (called irreducible components) neither of which is contained in another.

In this article we will be concerned with the structure of irreducible components and will prove the following main result.

\begin{theorem}\label{thm:main}
Let $P\in {\mathbb K}[\X,\U]$ and assume that $\ZZ(P)\subset\Real^{2n}$ is an irreducible real algebraic set.
Further, let $f=E(P)$ and assume that the dimension (see Definition~\ref{def:dimension} below) of $\ZZ(f)$ is $n-1$.
Then, assuming Schanuel's conjecture, either $\ZZ(f)$ is also irreducible, or every of its $(n-1)$-dimensional
irreducible components is a rational hyperplane through the origin.
\end{theorem}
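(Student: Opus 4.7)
The strategy is to transfer the problem to $\R^{2n}$ via the graph $\Gamma := \{(\x, e^{\x}) : \x \in \R^n\}$ and use Schanuel's conjecture to constrain anomalous components to rational hyperplanes through the origin.

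\emph{Lifting.} Setting $\tilde V := \ZZ(P) \cap \Gamma$, the projection $\pi \colon \Gamma \to \R^n$ is a real-analytic diffeomorphism with $\pi(\tilde V) = \ZZ(f)$, so irreducible components match up. Each $(n-1)$-dimensional component $W$ of $\ZZ(f)$ lifts to $\tilde W \subseteq \ZZ(P) \cap \Gamma$.

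\emph{Schanuel-generic point.} Assume for contradiction that $W \subsetneq \ZZ(f)$ is an $(n-1)$-dimensional irreducible component which is not a rational hyperplane through $\0$. The rational hyperplanes through $\0$ form a countable family; each one that is not equal to $W$ meets the irreducible $W$ in a proper closed subset of dimension $\leq n-2$. A density argument then yields a smooth point $\x_0 \in W$ whose coordinates $x_{0,1},\ldots,x_{0,n}$ are $\Q$-linearly independent. Schanuel's conjecture applied at $\x_0$ gives
\[
\mathrm{tr.deg}_{\Q}\, \Q\bigl(x_{0,1},\ldots,x_{0,n},e^{x_{0,1}},\ldots,e^{x_{0,n}}\bigr) \;\geq\; n.
\]

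\emph{Algebraic consequence.} Let $Y \subseteq \R^{2n}$ be the Zariski closure over $\mathbb K$ of $\tilde W$. Since the transcendence degree of any point of an affine variety is bounded by the dimension of the variety, $\dim Y \geq n$. By $\ZZ(P)$-irreducibility, $Y \subseteq \ZZ(P)$. If one can establish $Y = \ZZ(P)$, then $\tilde W$ is Zariski-dense in $\ZZ(P)$; combined with the fact that the other irreducible components of $\tilde V$ are closed in $\tilde V$ and cannot then be proper relative to the full $\ZZ(P)$-structure, a closure argument forces $\tilde W = \tilde V$, so $W = \ZZ(f)$, contradicting $W \subsetneq \ZZ(f)$.

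\emph{Main obstacle.} The crux is closing the gap between $\dim Y \geq n$ and $Y = \ZZ(P)$: a single application of Schanuel only delivers $\dim Y \geq n$, whereas $\dim \ZZ(P)$ can be as large as $2n-1$. I would address this by applying Schanuel uniformly across the $(n-1)$-parameter family of Schanuel-generic points in $W$, combined with the irreducibility of $\ZZ(P)$, to saturate the transcendence degree. Equivalently, one analyses the vanishing ideal of $\tilde W$ in $\mathbb K[\X,\U]$: any $Q$ in it that does \emph{not} vanish on all of $\ZZ(P)$ must encode a $\Q$-linear dependence among the $\U$-exponents of monomials of $P$, and such a dependence is compatible with $\tilde W \subseteq \Gamma$ only when $\tilde W$ lies inside a rational hyperplane through $\0$. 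This bookkeeping of monomials is precisely what produces the abstract's statement that the family of exceptional hyperplanes is determined by the monomials of $P$.
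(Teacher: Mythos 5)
Your setup (lifting to the graph, applying Schanuel at a point with $\Q$-linearly independent coordinates) is reasonable, but the argument breaks exactly at the place you yourself label the ``main obstacle'', and that obstacle is the whole content of the theorem. A single application of Schanuel at such a point only gives that the ${\mathbb K}$-Zariski closure $Y$ of $\tilde W$ has dimension at least $n$, and there is no route from this to $Y=\ZZ(P)$: if $W=\ZZ(E(S))$ is a \emph{proper} component of $\ZZ(f)$, then $\ZZ(P)\not\subset\ZZ(S)$ (otherwise every zero of $f$ would be a zero of $E(S)$, forcing $W=\ZZ(f)$), so $Y\subset\ZZ(P)\cap\ZZ(S)$, which by irreducibility of $\ZZ(P)$ has dimension at most $\dim\ZZ(P)-1\le 2n-2$. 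Since $n\le 2n-2$ for $n\ge 2$, the bound $\dim Y\ge n$ yields neither $Y=\ZZ(P)$ nor any contradiction. Your fallback — applying Schanuel ``uniformly'' over the family of generic points, or asserting that any $Q$ in the ideal of $\tilde W$ not vanishing on $\ZZ(P)$ ``must encode a $\Q$-linear dependence among the $\U$-exponents'' — is exactly the statement that needs proof and is not supplied. (A smaller issue: your density step assumes each rational hyperplane $\neq W$ meets $W$ in dimension $\le n-2$, but in the exponential Zariski topology a hyperplane can be \emph{properly contained} in an irreducible $(n-1)$-dimensional exponential set, as the Krull-dimension example in the paper shows in miniature.)

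The paper closes this gap by using Schanuel in the opposite direction, together with two ingredients missing from your plan. First, a slicing/dimension-reduction step: since $\dim\ZZ(P,S)\le 2n-2$, projecting the $(n-1)$-dimensional set $\ZZ(S,U_1-e^{X_1},\dots,U_n-e^{X_n})$ to a suitable choice of $n-1$ of the $X$-coordinates and fixing a generic \emph{algebraic}, $\Q$-linearly independent base point places the corresponding graph point $(\x,e^{\x})$ inside an algebraic set over ${\mathbb K}$ of dimension at most $n-1$; hence its transcendence degree is less than $n$, and Schanuel produces a rational relation that forces the remaining coordinate $x_\alpha$ to be algebraic. Second, the Lindemann--Weierstrass resonance lemma (Lemma~\ref{le:rational}) then pushes this algebraic point into the exceptional set $B$ built from vanishing coefficients and the monomial-resonance hyperplanes $W_P$ — a contradiction; the residual case, where the top-dimensional part of the component already lies in $B$, is handled by Lemma~\ref{le:structure}, which uses density of algebraic points and irreducibility of $\ZZ(P)$ and $\ZZ(S)$ to conclude either $T=V$ or $T_{n-1}\subset W_P$. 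Finally, to pass from ``the component is a union of rational hyperplanes plus points of smaller local dimension'' to ``the component is exactly one rational hyperplane'', the paper invokes Theorem~\ref{th:subset} to enclose the low-dimensional points in a proper exponential subset, contradicting irreducibility of the component; your proposal does not address this step at all. Without the slicing argument, the resonance lemma, and this low-dimension cleanup, the proof does not go through.
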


Schanuel's conjecture is formulated at the beginning of Section~\ref{sec:main_theorem}.
In the case of a single exponential term, i.e., when $P$ is independent of all, but possibly one,
variables $U_1, \ldots ,U_n$, the theorem can be made stronger and independent of Schanuel's conjecture
(see Theorem~\ref{thm:oneexpgeneral} below).

Let us illustrate Theorem~\ref{thm:main} by some examples.

\begin{example}\label{ex:irred}
Let $P= 2X-U+1$.
The straight line $\ZZ(P) \subset \Real^2$ is, of course, irreducible.
According to Theorem~\ref{thm:main}, the exponential set $V= \ZZ(2X-e^X+1)$ (consisting of two points) is irreducible.
This can also be seen by the following elementary argument, independent of Schanuel's conjecture,
which in various modifications is used throughout this paper.
Suppose $V$ is reducible.
The only way to split two-point set into two distinct non-empty parts is the partition into singletons.
Then the non-zero point $A \in V$ can be defined as $A= \ZZ(Q(X,e^X))$ for some $Q(X,U) \in {\mathbb K}[X,U]$.
It follows that $A$ is the projection along $U$ of an isolated point in $\ZZ(P,Q)$ with algebraic coordinates.
This contradicts the Lindemann theorem.
\end{example}

\begin{example}
Let $P:= X_1U_2+X_2U_1-X_1-X_2$ and $f:=E(P)$.
Then $\dim \ZZ(P)=3$ and $\dim \ZZ(f)=1$.
The polynomial $P$ is irreducible over ${\mathbb K}$ (even over $\Real$), hence the algebraic set $\ZZ(P)$
is irreducible over $\mathbb K$.
On the other hand, $\ZZ(f) \subset \Real^2$ is reducible and consists of two irreducible components, which are the lines
$\ZZ(X_1)$ and $\ZZ(X_2)$.
\end{example}

\begin{figure}
    \subfigure[The transformed Cartan umbrella
     ]{\includegraphics[width=0.49\textwidth]{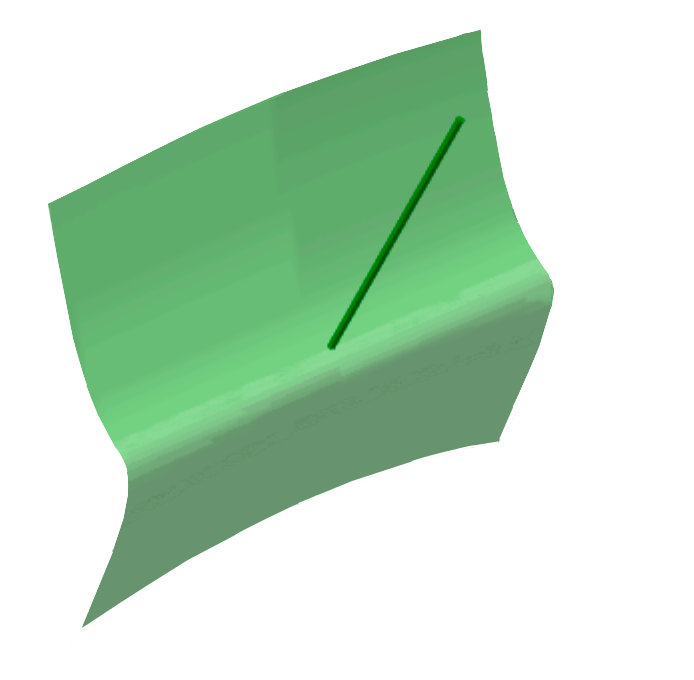}}
    \subfigure[The intersection with $e^{X}=U$ in the line $L$ and the point $A$]
    {\includegraphics[width=0.49\textwidth]{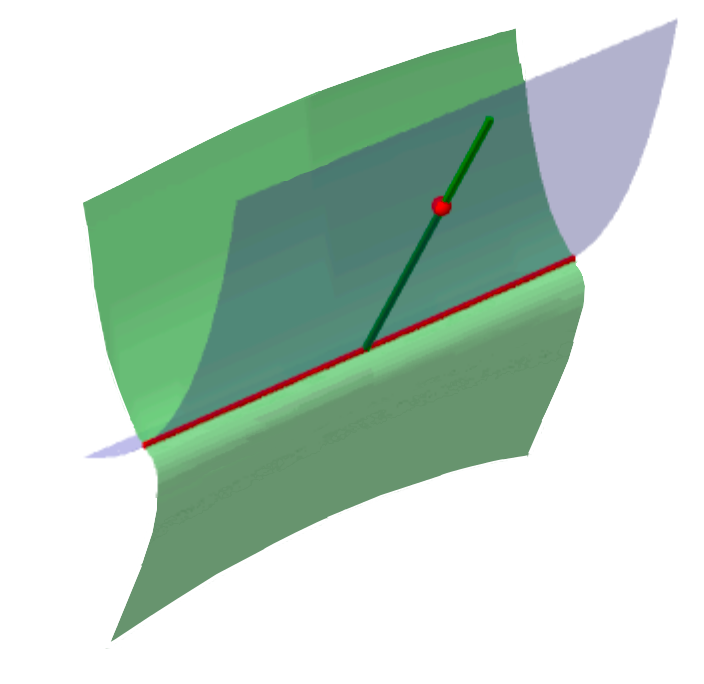}}
\caption{Visualization of Example~\ref{ex:intro}}
\end{figure}

\begin{example}\label{ex:intro}
This example illustrates the case when for an irreducible $\ZZ(P)$ the exponential hypersurface $\ZZ(E(P))$
has irreducible components of codimension greater than 1.

Consider the polynomial
$$P:=(X_1+U-1)((2X_1-U+1)^2+X_2^2)+(2X_1-U+1)^3.$$
Note that $\ZZ(P)$ is an affine transformation the {\em Cartan umbrella} \cite{BCR}.
The algebraic set $\ZZ(P) \subset \Real^3$ contains the straight line $L:= \ZZ( U-1,X_1)$, therefore,
it intersects with the surface $\ZZ(U-e^{X_1})$ along this line.
It also contains the straight line $\ZZ(2X_1-U+1, X_2 \}$, which intersects with $\ZZ(U-e^{X_1})$ by exactly two points,
$(0,0,1) \in L$ and another point, $A$, with transcendental coordinates.
We now prove that
$$\ZZ(P) \cap \ZZ(U-e^{X_1}) = L \cup \{ A \}.$$

Let $f(X_1,X_2):=P(X_1,X_2, e^{X_1})$.
Note that $X_1+e^{X_1}-1$ equals $0$ if and only if $X_1=0$, hence for $X_1 \neq 0$ the equation $f=0$ can be rewritten as
\begin{equation}\label{eq:eq}
X_2^2=-(2X_1-e^{X_1} +1)^2 \left( \frac{2X_1-e^{X_1}+1}{X_1+e^{X_1}-1}+1 \right).
\end{equation}
Now we prove that
\begin{equation}\label{eq:ineq}
\frac{2X_1-e^X_1+1}{X_1+e^X_1-1} > -1
\end{equation}
for all $X_1 \neq 0$.
If $X_1>0$ then $X_1+e^{X_1}-1>0$, hence (\ref{eq:ineq}) is equivalent to
$$2{X_1}-e^{X_1}+1>-{X_1}-e^{X_1}+1,$$
i.e., to $3{X_1}>0$, which is obviously true.
If ${X_1}<0$, then ${X_1}+e^{X_1}-1<0$, so (\ref{eq:ineq}) is equivalent to
$$2{X_1}-e^{X_1}+1<-{X_1}-e^{X_1}+1,$$
or $3{X_1}<0$, which again is true.
It follows that (\ref{eq:eq}) can hold true (for ${X_1} \neq 0$) if and only if ${X_2}=2{X_1}-e^{X_1}+1=0$, and our claim is proved.

The polynomial $P$ is irreducible over $\Real$, hence the algebraic set $\ZZ(P) \subset \Real^3$ is irreducible over $\mathbb K$.
On the other hand, the set $\ZZ(f) \subset \Real^2$ is reducible over $\mathbb K$, with two irreducible components: one-dimensional
$\ZZ(X_1)$ and zero-dimensional
$$\ZZ( 2{X_1}-e^{X_1}+1, X_2)$$
which consists of two points, rational $(0,0)$ and transcendental projection of $A$ along $U$.
\end{example}

\section{Regular and singular points}\label{sec:prelim}

The aim of this section is to prove that taking Zariski closure of a set of all points of a given local dimension
does not increase the dimension (Theorem~\ref{th:subset}).
The idea is to consider the singular locus of the ambient exponential set.
To that end, we adjust to the exponential case the standard routine of definitions and statements
regarding regular and singular points.
We note some differences with the algebraic case, in particular that the dimension of a singular locus of an
exponential set may remain the same as the dimension of the set.

Recall that in the introduction we defined the map
$$E:\> {\mathbb K}[\X,\U] \to {\mathbb K}[\X,e^{\X}],$$
such that $E(P(\X, \U))=P(\X,e^{\X})$, for every $P \in {\mathbb K}[\X,\U]$.

\begin{lemma}\label{le:isomorphism}
The map $E$ is an isomorphism of rings.
\end{lemma}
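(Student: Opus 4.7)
The plan is to prove this by showing that the two ring operations are preserved (straightforward) and then concentrating on injectivity, since surjectivity is immediate from the definition of $\mathbb{K}[\X,e^{\X}]$ as the image of $E$. The content of the lemma lies entirely in the statement that the kernel of $E$ is trivial, i.e.\ that the functions $X_1,\ldots,X_n,e^{X_1},\ldots,e^{X_n}$ are algebraically independent over $\mathbb{K}$.

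To establish triviality of the kernel, I would take a polynomial $P \in \mathbb{K}[\X,\U]$ with $E(P)\equiv 0$ and expand it as a polynomial in $\U$:
$$P(\X,\U)=\sum_{\alpha\in\N^{n}} Q_{\alpha}(\X)\,\U^{\alpha},$$
where only finitely many $Q_{\alpha}\in\mathbb{K}[\X]$ are nonzero. The hypothesis then reads
$$\sum_{\alpha} Q_{\alpha}(\X)\,e^{\alpha\cdot \X}\equiv 0 \qquad \text{on }\Real^{n},$$
and the goal is to conclude that every $Q_{\alpha}$ vanishes identically.

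The reduction I have in mind is to restrict this identity to a generic line $\X=t\mathbf{v}$, where $\mathbf{v}\in\Real^{n}$ is chosen so that the inner products $\alpha\cdot\mathbf{v}$ are pairwise distinct for the finitely many $\alpha$ appearing in $P$ (this is guaranteed off a finite union of hyperplanes, which is nonempty because the $\alpha$ are distinct lattice points). Along this line one obtains a scalar identity
$$\sum_{\alpha} Q_{\alpha}(t\mathbf{v})\,e^{(\alpha\cdot\mathbf{v})\,t}\equiv 0 \qquad \text{on }\Real,$$
in which the functions $t\mapsto Q_{\alpha}(t\mathbf{v})$ are one-variable polynomials and the frequencies $\alpha\cdot\mathbf{v}$ are pairwise distinct real numbers.

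At this point I would invoke the classical fact that if $\sum_{i} R_{i}(t)e^{\mu_{i}t}\equiv 0$ with polynomials $R_{i}\in\Real[t]$ and pairwise distinct real $\mu_{i}$, then each $R_{i}\equiv 0$; this is the only nontrivial input, and can be proved by a Wronskian argument, by successive differentiation combined with the asymptotics $t\to+\infty$, or by elementary linear algebra on a Vandermonde-type system. Applying it yields $Q_{\alpha}(t\mathbf{v})\equiv 0$ in $t$ for every $\alpha$ and every generic $\mathbf{v}$, and since the set of admissible $\mathbf{v}$ is Zariski-dense in $\Real^{n}$, each polynomial $Q_{\alpha}$ must vanish identically. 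The main obstacle, if one counts it as such, is packaging this classical one-variable independence result cleanly; everything else is bookkeeping about ring homomorphisms and a density argument.
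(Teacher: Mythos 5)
Your proof is correct, but it takes a genuinely different route from the paper's. The paper argues by contradiction: if $E(P)=E(Q)$ with $P\neq Q$, then $\ZZ(\U-e^{\X})\subset\ZZ(P-Q)$; after discarding the lower-dimensional locus of points $\x$ whose whole fibre $\{\x\}\times\Real^n$ lies in $\ZZ(P-Q)$, it picks a point $\x\in\Real_{alg}^n$ with coordinates linearly independent over $\Q$, so that $x_1,\dots,x_n,e^{x_1},\dots,e^{x_n}$ are algebraically dependent over $\Q$, contradicting the Lindemann--Weierstrass theorem. That argument leans on transcendence theory and on the standing hypothesis $\mathbb K\subseteq\Real_{alg}$ (dependence over $\mathbb K$ is converted into dependence over $\Q$). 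You instead write $E(P)=\sum_\alpha Q_\alpha(\X)\,e^{\alpha\cdot\X}$, restrict to a generic line $\X=t\mathbf{v}$ to make the frequencies $\alpha\cdot\mathbf{v}$ pairwise distinct, and invoke the elementary fact that a quasipolynomial $\sum_i R_i(t)e^{\mu_i t}$ with distinct real $\mu_i$ vanishes identically only if every $R_i$ does; since the admissible directions form the complement of finitely many hyperplanes, the union of the corresponding lines is dense in $\Real^n$ and every $Q_\alpha$ vanishes identically, so $P=0$. This buys something: no transcendence theory is needed, and injectivity holds verbatim for real (not just real algebraic) coefficients, whereas the paper's proof is tied to algebraic coefficients and mirrors the Lindemann-type arguments it reuses later. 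The only ingredient you should state and prove (or cite) explicitly is the classical independence lemma for quasipolynomials, which is standard (Wronskian, or induction via division by the dominant exponential and differentiation).
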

\begin{proof}
It is immediate that $E$ is an epimorphism of rings.
Thus, it remains to establish that $E$ is injective.
In order to argue by contradiction, assume that $E$ is not injective.
Thus, there exist distinct $P,Q \in {\mathbb K}[\X,\U]$ such that the functions $E(P)$ and $E(Q)$ coincide.
Since $P$ and $Q$ are distinct, the algebraic set $\ZZ(P-Q)$ has  dimension at most $2n-1$.
Also, $\ZZ( \U-e^{\X} ) \subset \ZZ(P-Q)$.
Observe that the dimension of the Zariski closure $Z$ of
$\{ \x \in \Real^n |\> \{ \x \} \times \Real^n \subset  \ZZ(P-Q)\}$ is less than $n$.
Take a point $\x=(x_1, \ldots ,x_n) \in \Real_{alg}^n \setminus Z$ such that its coordinates are linearly independent over $\Q$.
We have that $(\x, e^{\x}) \in \ZZ(\U-e^{\X}) \subset \ZZ(P-Q)$, hence the  numbers
$x_1, \ldots ,x_n, e^{x_1}, \ldots ,e^{x_n}$ are algebraically dependent over $\Q$.
But now it follows from the Lindemann-Weierstrass theorem that numbers $x_1, \ldots x_n$
are linearly dependent over $\Q$ which contradicts the choice of $\x$.
\end{proof}

Lemma~\ref{le:isomorphism} immediately implies the following corollary.

\begin{corollary}
The ring ${\mathbb K}[\X, e^{\X}]$ is Noetherian.
\end{corollary}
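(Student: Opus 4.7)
The plan is to deduce this directly from Lemma~\ref{le:isomorphism} together with Hilbert's Basis Theorem. Since $\mathbb{K}$ is a field, the polynomial ring ${\mathbb K}[\X,\U]={\mathbb K}[X_1,\ldots,X_n,U_1,\ldots,U_n]$ in $2n$ indeterminates is Noetherian by $2n$ applications of Hilbert's Basis Theorem to the Noetherian base ring $\mathbb K$.

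Lemma~\ref{le:isomorphism} produces a ring isomorphism $E \colon {\mathbb K}[\X,\U] \to {\mathbb K}[\X,e^{\X}]$, and the Noetherian property transfers across such isomorphisms: given any ascending chain $I_1 \subseteq I_2 \subseteq \cdots$ of ideals in ${\mathbb K}[\X,e^{\X}]$, the pullback $E^{-1}(I_1) \subseteq E^{-1}(I_2) \subseteq \cdots$ is an ascending chain of ideals in the Noetherian ring ${\mathbb K}[\X,\U]$, hence stabilizes, so the original chain stabilizes too. This immediately yields the corollary.

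There is no real obstacle; all the substantive work lies inside Lemma~\ref{le:isomorphism}, where the Lindemann--Weierstrass theorem was used to show injectivity of $E$. The only conceptual point worth flagging is that, although ${\mathbb K}[\X,e^{\X}]$ superficially looks like a richer object than an ordinary polynomial ring (its elements are genuine transcendental functions on $\Real^n$), Lemma~\ref{le:isomorphism} identifies it abstractly with ${\mathbb K}[\X,\U]$, so every good property of a polynomial ring in $2n$ variables over $\mathbb K$ (Noetherianness, primary decomposition, Krull dimension $2n$, unique factorization, and so on) is inherited by ${\mathbb K}[\X,e^{\X}]$ for free.
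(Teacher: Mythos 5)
Your argument is correct and is exactly the paper's route: the corollary is stated as an immediate consequence of Lemma~\ref{le:isomorphism}, i.e.\ Noetherianity of ${\mathbb K}[\X,\U]$ (Hilbert's Basis Theorem) transfers across the ring isomorphism $E$. Nothing further is needed.
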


Given an exponential set $V\subset\Real^n$, let $I(V)\subset {\mathbb K}[\X,e^{\X}]$ denote the set
of all $E$-polynomials in ${\mathbb K}[\X,e^{\X}]$ that vanish on $V$.
It is easy to see that $I(V)$ is an ideal in ${\mathbb K}[\X,e^{\X}]$.

The following corollary is an immediate implication of Noetherianity.

\begin{corollary}
The ideal $I(V)$ is finitely generated.
\end{corollary}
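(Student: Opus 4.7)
The plan is very short because this is an immediate consequence of the preceding corollary. By Lemma~\ref{le:isomorphism} the map $E$ is a ring isomorphism, so ${\mathbb K}[\X,e^{\X}]$ is isomorphic to the polynomial ring ${\mathbb K}[\X,\U]$ in $2n$ variables over a field, which is Noetherian by the Hilbert basis theorem. This was recorded in the preceding corollary. Once that fact is in hand, nothing further is really required.

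So my proof would just invoke the standard equivalence: in a Noetherian ring every ideal is finitely generated. Since $I(V)$ was already observed (in the paragraph preceding the statement) to be an ideal of ${\mathbb K}[\X,e^{\X}]$, and this ring is Noetherian, $I(V)$ is automatically finitely generated. No choice of generators, no extra hypothesis on $V$, and no appeal to the Lindemann--Weierstrass argument from Lemma~\ref{le:isomorphism} is needed at this step — the entire content has been pushed into the isomorphism lemma.

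The only conceivable obstacle, and it is not one here, would be a subtle difference between the ring structure and the functional structure of ${\mathbb K}[\X,e^{\X}]$: one might worry that $I(V)$, defined as the set of $E$-polynomials vanishing on $V$, is not literally the preimage under $E$ of some ideal of ${\mathbb K}[\X,\U]$. But since $E$ is a ring isomorphism, ideals on the two sides correspond perfectly, and $I(V)$ pulls back to the ideal $\{P\in{\mathbb K}[\X,\U] : E(P)|_V\equiv 0\}$, whose finite generation transports back through $E$. Thus the argument really is a one-line invocation of Noetherianity, and I would present it as such.
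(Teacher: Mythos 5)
Your proof is correct and matches the paper's own reasoning: the paper likewise presents this as an immediate consequence of the Noetherianity of ${\mathbb K}[\X,e^{\X}]$, established via the isomorphism $E$ in Lemma~\ref{le:isomorphism}. Nothing further is needed.
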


The next corollary is a standard implication of Noetherianity (see \cite[Proposition~1.1]{hartshorne}).

\begin{corollary}\label{le:topology}
The complements of all exponential sets in $\Real^n$ form a Noethrian topology on $\Real^n$.
We call this topology the {\em Zariski topology}.
\end{corollary}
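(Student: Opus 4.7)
The plan is to verify the three topology axioms for the family of exponential sets regarded as closed sets, and then derive the Noetherian property from the Noetherianity of ${\mathbb K}[\X, e^{\X}]$ established in the previous corollary.

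First I would check that $\emptyset = \ZZ(1)$ and $\Real^n = \ZZ(0)$ are exponential sets, and that finite unions are handled by the standard product trick:
\begin{equation*}
\ZZ(f_1,\ldots,f_k) \cup \ZZ(g_1,\ldots,g_l) = \ZZ\bigl( f_i g_j : 1 \le i \le k,\ 1 \le j \le l \bigr),
\end{equation*}
the non-trivial inclusion following by observing that if all products vanish at a point $p$ but some $f_{i_0}(p) \neq 0$, then every $g_j$ must vanish at $p$.

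For arbitrary intersections, given a family $\{V_\alpha\}$ of exponential sets, I would consider the ideal $J := \sum_\alpha I(V_\alpha) \subset {\mathbb K}[\X, e^{\X}]$. By the previous corollary this ideal is finitely generated, say $J = (f_1, \ldots, f_k)$. The point to verify here is the identity $V_\alpha = \ZZ(I(V_\alpha))$: one inclusion is tautological, and the other holds because $V_\alpha = \ZZ(g_1,\ldots,g_m)$ for some $g_i \in I(V_\alpha)$, whence $\ZZ(I(V_\alpha)) \subseteq \ZZ(g_1,\ldots,g_m) = V_\alpha$. Combining this with the fact that a point lies in $\bigcap_\alpha V_\alpha$ iff every element of $J$ vanishes at it, we get $\bigcap_\alpha V_\alpha = \ZZ(f_1, \ldots, f_k)$, an exponential set.

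Finally, to show the topology is Noetherian, I would pass from a descending chain $V_1 \supseteq V_2 \supseteq \cdots$ of closed sets to the ascending chain of ideals $I(V_1) \subseteq I(V_2) \subseteq \cdots$ in ${\mathbb K}[\X, e^{\X}]$. Noetherianity of the ring forces this chain to stabilize from some index $N$ onwards, and applying $\ZZ(\cdot)$ together with the identity $V_i = \ZZ(I(V_i))$ recovers $V_N = V_{N+1} = \cdots$. There is no real obstacle here beyond the mild bookkeeping of the Galois-type correspondence $V \leftrightarrow I(V)$; the entire argument is the formal analogue of \cite[Proposition~1.1]{hartshorne} with the Noetherianity input supplied by Lemma~\ref{le:isomorphism}.
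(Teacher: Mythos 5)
Your argument is correct and is essentially the written-out version of what the paper relies on: the standard Zariski-topology verification (as in Hartshorne, Proposition~1.1), with finite generation of ideals in the Noetherian ring ${\mathbb K}[\X,e^{\X}]$ handling arbitrary intersections and the ascending-chain condition on the ideals $I(V_i)$, together with $V_i=\ZZ(I(V_i))$, giving Noetherianity of the topology. No discrepancies with the paper's (cited) proof.
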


\begin{definition}
A non-empty subset $\mathcal Y$ of a topological space $\mathcal X$ is called {\em irreducible} if it cannot be
represented as a union ${\mathcal Y}= {\mathcal Y}_1 \cup {\mathcal Y}_2$ of its proper subsets ${\mathcal Y}_1$ and
${\mathcal Y}_2$ each of which is closed in $\mathcal Y$.
For an empty set $\mathcal Y$ irreducibility is undefined.
\end{definition}

Applying this definition to ${\mathcal X}= \Real^n$, equipped with the Zariski topology defined in Corollary~\ref{le:topology},
and $\mathcal Y$ being an exponential set, we get the definition of an irreducible exponential set.

The following corollary is another standard implication of Noetherianity (see \cite[Proposition~1.5]{hartshorne}).
\begin{corollary}
Every non-empty exponential set $V \subset \Real^n$ can be represented as a finite union $V=V_1 \cup \cdots \cup V_t$ of
irreducible exponential sets $V_i$.
If we require $V_i \not\subset V_j$ for $i \neq j$, then the sets $V_i$ are defined uniquely and are called {\em irreducible components}
of $V$.
\end{corollary}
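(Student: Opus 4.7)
The plan is to adapt the classical Noetherian argument (as referenced) to the present setting, using only the Noetherianity of the Zariski topology on $\Real^n$ established in Corollary~\ref{le:topology}.

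For existence, I would argue by Noetherian induction. Let $\mathcal{F}$ denote the family of non-empty exponential sets in $\Real^n$ that admit no finite decomposition into irreducible exponential subsets. If $\mathcal{F}$ were non-empty, the descending chain condition on Zariski-closed sets would supply a minimal element $V\in\mathcal{F}$. Such a $V$ cannot itself be irreducible, for then $V$ is trivially its own irreducible decomposition; hence $V = W_1 \cup W_2$ with each $W_i$ a proper closed subset of $V$. Because the Zariski-closed subsets of $V$ are precisely the exponential subsets contained in $V$, each $W_i$ is an exponential set. Minimality of $V$ forces $W_1, W_2 \notin \mathcal{F}$, so each admits a finite irreducible decomposition; concatenating these gives one for $V$, a contradiction. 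Discarding any $V_i$ that is contained in some other $V_j$ then produces the required irredundant decomposition.

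For uniqueness, suppose $V_1 \cup \cdots \cup V_t = W_1 \cup \cdots \cup W_s$ are two irredundant irreducible decompositions of the same exponential set $V$. For each $i$, we have $V_i = \bigcup_j (V_i \cap W_j)$, with each intersection closed in $V_i$. Irreducibility of $V_i$ forces some $V_i \cap W_{j(i)} = V_i$, and therefore $V_i \subset W_{j(i)}$. The symmetric argument supplies $W_{j(i)} \subset V_{i'}$ for some $i'$, and irredundancy of the first decomposition forces $i = i'$, so $W_{j(i)} = V_i$. Swapping the roles of the two decompositions shows that $i \mapsto j(i)$ is a bijection, and hence the two decompositions agree up to reordering.

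The only ingredient genuinely specific to the exponential-algebraic category is the Noetherianity of the coordinate ring ${\mathbb K}[\X,e^{\X}]$, which in turn rests on Lemma~\ref{le:isomorphism}. Since Noetherianity is already in hand, I do not anticipate a real obstacle: the statement is a direct transcription of a classical algebraic-geometry fact to the new topology, and the only care required is to confirm at each step that the subsets involved remain exponential sets, which is immediate from the definition of the Zariski topology in Corollary~\ref{le:topology}.
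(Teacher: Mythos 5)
Your proposal is correct and is exactly the standard Noetherian argument the paper invokes by citing \cite[Proposition~1.5]{hartshorne}: existence by a minimal counterexample under the descending chain condition from Corollary~\ref{le:topology}, and uniqueness by absorbing each irreducible $V_i$ into some $W_{j(i)}$ and using irredundancy. No gaps; the only point needing care, that intersections and proper closed subsets remain exponential sets, is handled as you note.
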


We borrow the definition of a regular point of an exponential set from real algebraic geometry (see \cite[Definition~3.2.2]{BR}).
Let $V \subset \Real^n$ be an exponential set and $I(V)=(f_1, \ldots ,f_k)$ its ideal generated by exponential
polynomials $f_1, \ldots ,f_k \in {\mathbb K}[\X,e^{\X}]$.
Set $r= \sup_{\x \in V} {\rm rank}\ (V, \x)$, where
$$
{\rm rank}\ (V, \x) := {\rm rank}\ \left( \frac{\partial (f_1, \ldots ,f_k)}{\partial (X_1, \ldots ,X_n)}(\x) \right).
$$
Note (see \cite{BR}) that the number $r$ does not depend on the choice of the set of generators $f_1, \ldots ,f_k$.
The number $r$ is called the {\em rank} of the ideal $I(V)$, and we write $r= {\rm rank}\ I(V)$.

\begin{definition}
Let $V \subset \Real^n$ be an exponential set.
\begin{enumerate}
\item If $V$ is irreducible then a point $\x \in V$ is called a {\em regular} point of $V$ if
${\rm rank}\ (V, \x) = {\rm rank}\ I(V)$.
\item  If $V$ is reducible  then $\x \in V$ is called {\em regular} point of $V$ if
there exists one and only one irreducible component of $V$ containing $\x$.
\item A point $\x \in V$ that is not  regular it is called a {\em singular} point of $V$.
\item  We denote the subset of all regular points of $V$ by ${\rm Reg}\ (V)$ and the subset of all singular points
of $V$ by ${\rm Sing}\ (V)$.
\item An exponential set is called {\em non-singular} if $V= {\rm Reg}\ (V)$.
\end{enumerate}
\end{definition}

The proof of the following lemma is exactly the same as the proof of Proposition~3.2.4 in \cite{BR} for algebraic sets.

\begin{lemma}\label{le:contained}
The set ${\rm Sing}\ (V)$ is an exponential set properly contained in $V$.
\end{lemma}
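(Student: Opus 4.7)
The plan is to follow the algebraic argument of \cite{BR}, split into the two cases of the definition of regularity. The only two facts specific to the exponential setting that are needed are (a) the class ${\mathbb K}[\X,e^{\X}]$ is closed under partial differentiation, by the chain rule $(\partial/\partial X_j)(e^{X_j})=e^{X_j}$, and (b) the exponential sets are the closed sets of a topology on $\R^n$ (Corollary~\ref{le:topology}).

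First I would treat the case when $V$ is irreducible. Write $I(V)=(f_1,\ldots,f_k)$ and $r={\rm rank}\ I(V)$. Since $\x\mapsto {\rm rank}\ (V,\x)$ takes only integer values in $\{0,1,\ldots,\min(k,n)\}$, the supremum defining $r$ is actually a maximum, attained at some $\x_0\in V$. A point $\x\in V$ is singular precisely when every $r\times r$ minor of the matrix $(\partial f_i/\partial X_j)(\x)$ vanishes. By fact (a), each such minor is a polynomial combination of $E$-polynomials, so is itself an $E$-polynomial. Hence ${\rm Sing}\ (V)$ is the joint zero set of the finite collection $\{f_1,\ldots,f_k\}$ together with all $r\times r$ Jacobian minors, which exhibits it as an exponential set; and the existence of $\x_0\in V\setminus {\rm Sing}\ (V)$ shows it is properly contained in $V$.

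Next I would treat the case when $V$ is reducible, with irreducible components $V_1,\ldots,V_t$. Directly from the definition,
$${\rm Sing}\ (V)=\bigcup_{1\le i<j\le t}(V_i\cap V_j).$$
Each pairwise intersection $V_i\cap V_j$ is closed in the Zariski topology, and fact (b) implies that a finite union of closed sets is again closed, so ${\rm Sing}\ (V)$ is an exponential set. For proper containment, suppose for contradiction that some $V_i\subset\bigcup_{j\ne i}V_j$. Since $V_i$ is irreducible and each $V_i\cap V_j$ is closed in $V_i$, the identity $V_i=\bigcup_{j\ne i}(V_i\cap V_j)$ forces $V_i=V_i\cap V_{j_0}$ for some $j_0\ne i$, i.e., $V_i\subset V_{j_0}$, contradicting the irredundancy of the irreducible decomposition.

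I do not expect a serious obstacle. The reducible case is purely topological, relying only on Corollary~\ref{le:topology} and on the defining irredundancy of the irreducible decomposition. The irreducible case reduces, via the chain rule, to the standard algebraic fact that the singular locus is cut out by the ideal together with the maximal Jacobian minors. The only subtlety worth flagging is that $r$ is genuinely attained on $V$, which here follows not from any analytic constructibility argument but simply from the fact that rank is an integer bounded by $\min(k,n)$.
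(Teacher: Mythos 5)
Your argument is correct and is essentially the paper's own: the paper simply refers to the proof of Proposition~3.2.4 in \cite{BR}, and your case split (Jacobian $r\times r$ minors for irreducible $V$, pairwise intersections of components for reducible $V$) together with the observation that ${\mathbb K}[\X,e^{\X}]$ is closed under partial differentiation is exactly the adaptation of that argument to the exponential setting. No gaps worth noting beyond the trivial implicit facts that $V\neq\emptyset$ and that $\ZZ(f_1,\ldots,f_k)=V$ since the generators of $I(V)$ include a defining $E$-polynomial of $V$.
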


\begin{lemma}\label{le:singfilt}
Let $V \subset \Real^n$ be an exponential set.
Then there is a finite filtration
\begin{equation}\label{eq:singfilt}
V \supset {\rm Sing}\ (V) \supset {\rm Sing}\ ({\rm Sing}\ (V)) \supset
{\rm Sing}\ ({\rm Sing}\ ({\rm Sing}\ (V))) \supset \cdots
\end{equation}
such that the last set in the chain is non-singular.
\end{lemma}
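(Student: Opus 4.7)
The plan is to combine the two ingredients already in hand: Lemma~\ref{le:contained}, which says $\mathrm{Sing}(W)$ is properly contained in $W$ for any (non-empty) exponential set $W$, and Corollary~\ref{le:topology}, which says that the Zariski topology on $\Real^n$ is Noetherian, so every strictly descending chain of exponential sets stabilises after finitely many steps.

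Concretely, I would set $V_0:=V$ and $V_{i+1}:=\mathrm{Sing}(V_i)$, so the filtration (\ref{eq:singfilt}) is the chain $V_0 \supseteq V_1 \supseteq V_2 \supseteq \cdots$. Lemma~\ref{le:contained} (applied to each $V_i$ that happens to be non-empty) already supplies the strict inclusion $V_{i+1} \subsetneq V_i$, and Lemma~\ref{le:contained} also guarantees that each $V_i$ is itself an exponential set, so all entries live in the Noetherian topology provided by Corollary~\ref{le:topology}. Therefore the chain cannot descend strictly forever: there is a least $k \geq 0$ with $V_{k+1} = V_k$. The strict-descent property then forces $V_k = \emptyset$, which by unravelling the recursion means $\mathrm{Sing}(V_{k-1}) = \emptyset$, i.e.\ $V_{k-1} = \mathrm{Reg}(V_{k-1})$. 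By clause~(5) of the definition of regular/singular points, this is exactly the assertion that the last non-empty entry of the chain is non-singular, so the filtration may be truncated at that index.

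The only real point to be careful about — and I would expect this to be the main (minor) obstacle — is the behaviour at the empty set, since Lemma~\ref{le:contained} is stated for arbitrary $V$ but its strict-inclusion conclusion is only meaningful when $V$ is non-empty. Accordingly, one should interpret ``the last set in the chain'' as the last non-empty term (equivalently, terminate the filtration at the first index where $\mathrm{Sing}$ outputs $\emptyset$); with the convention that $\emptyset$ is trivially non-singular, either reading of the statement is then vacuously consistent. No further input is needed beyond Noetherianity and the strict inclusion from Lemma~\ref{le:contained}; the argument is formally identical to the analogous finiteness statement for iterated singular loci in real algebraic geometry.
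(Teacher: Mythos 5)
Your proposal is correct and follows essentially the same route as the paper's proof: Noetherianity of the Zariski topology (Corollary~\ref{le:topology}) forces the descending chain to terminate, and Lemma~\ref{le:contained} (strict containment of ${\rm Sing}$ in any non-empty exponential set) shows the chain is strictly decreasing, so the last (non-empty) term must satisfy ${\rm Sing}=\emptyset$, i.e.\ be non-singular. Your extra care about the empty set and the stabilisation index is a harmless elaboration of the paper's two-sentence argument, not a different method.
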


\begin{proof}
Termination of the chain follows from Noetherianity since for every exponential set $V \subset \Real^n$ the set ${\rm Sing(V)}$
is again an exponential set.
The last set in this chain is non-singular because otherwise the set of its singular points would define a proper subset.
\end{proof}

\begin{remark}\label{re:partition}
It follows from Lemma~\ref{le:singfilt} that we can define a ``non-singular stratification'' of an exponential set
$V \subset \Real^n$ as the following finite partition
$$
V= {\rm Reg }\ (V) \cup {\rm Reg }\ ({\rm Sing }\ (V)) \cup {\rm Reg}\ (({\rm Sing}\ ({\rm Sing}\ (V))))
\cup \cdots.$$
\end{remark}

The proof of the following lemma is exactly the same as the proof of Proposition~3.2.9 in \cite{BR} for algebraic sets.

\begin{lemma}\label{le:3.2.9}
If $V\subset\Real^n$ is an irreducible exponential set, then for every $\x \in {\rm Reg}\ (V)$ there exists a Zariski neighbourhood
$U$ of $\x$ in $\Real^n$, and exponential polynomials $f_1, \ldots ,f_r \in I(V)$ such that
$V \cap U = U \cap \{f_1= \cdots =f_r=0 \}$, and
$$
{\rm rank}\ \left( \frac{\partial (f_1, \ldots ,f_r)}{\partial (X_1, \ldots ,X_n)} (\y) \right)
= {\rm rank}\ I(V)=r
$$
for every $\y \in V \cap U$.

In particular, ${\rm Reg}\ (V)$ is a real analytic submanifold of $\Real^n$ of dimension $n-r$.
\end{lemma}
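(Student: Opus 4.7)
The plan is to mirror the proof of Proposition~3.2.9 in \cite{BR} for real algebraic sets, replacing the algebraic Jacobian argument by the real analytic implicit function theorem and using the Noetherianity of ${\mathbb K}[\X,e^{\X}]$ established via Lemma~\ref{le:isomorphism}.

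Set $r := {\rm rank}\ I(V)$ and fix a finite generating set $f_1,\ldots,f_k$ of $I(V)$. Because $\x$ is a regular point of the irreducible set $V$, the Jacobian $(\partial f_i/\partial X_j)(\x)$ has rank exactly $r$, and after relabeling the generators and the variables I may assume that the top-left $r\times r$ minor of this matrix is nonzero at $\x$. This minor is itself an $E$-polynomial, so its non-vanishing locus is a Zariski open neighbourhood $U_0$ of $\x$ on which the Jacobian of $(f_1,\ldots,f_k)$ has rank identically $r$: at least $r$ because of the nonzero minor, and at most $r$ because $r$ is the global supremum.

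Next I would apply the real analytic implicit function theorem to $(f_1,\ldots,f_r)$ at $\x$. It yields a Euclidean ball $B$ around $\x$ such that
$$M := B \cap \{f_1=\cdots=f_r=0\}$$
is a connected real analytic submanifold of $\Real^n$ of dimension $n-r$, realized as a graph over a suitable coordinate $(n-r)$-plane.

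The core step is to identify $V$ with $W:=\ZZ(f_1,\ldots,f_r)$ in a Zariski neighbourhood of $\x$. Since $f_1,\ldots,f_r\in I(V)$ we have $V\subset W$; decomposing $W=V_1\cup\cdots\cup V_s$ into irreducible components, the irreducibility of $V$ forces $V\subset V_i$ for some $i$, say $i=1$. A dimension-count argument combining $V\subset M$ near $\x$ with the smooth $(n-r)$-dimensional structure of $M$ shows that the germ of $V$ at $\x$ fills out all of $M$; hence $V_1\cap B = M$ and $\x$ lies in no $V_j$ for $j\geq 2$. Putting $Z:=V_2\cup\cdots\cup V_s$, the complement $U := U_0\setminus Z$ is the required Zariski open neighbourhood of $\x$, and $V\cap U = W\cap U = U\cap\{f_1=\cdots=f_r=0\}$, with the rank identity on $V\cap U$ already secured in the first step. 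The final ``in particular'' clause is then immediate: at any $\y\in{\rm Reg}(V)$ the same construction supplies $f_1,\ldots,f_r\in I(V)$ with Jacobian of rank $r$ at $\y$, so the implicit function theorem exhibits $V$ locally as a real analytic submanifold of dimension $n-r$.

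I expect the hardest point to be the germ-equality of $V$ and $M$ at $\x$. The argument in \cite{BR} leans on Krull dimension and the Jacobian criterion for regular local rings, tools not directly available for $E$-polynomials. My proposed replacement is to work in the analytic category: if some $f_{j}\in I(V)$ with $j>r$ failed to vanish identically on $M$ near $\x$, then its differential at $\x$ would lie outside the span of $df_1(\x),\ldots,df_r(\x)$, raising the rank of the Jacobian of $(f_1,\ldots,f_r,f_j)$ to $r+1$ and contradicting ${\rm rank}(V,\x)=r$; therefore every generator of $I(V)$ vanishes on $M$ near $\x$, which gives $M\subset V$ locally and hence the desired germ-equality.
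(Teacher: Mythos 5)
Your setup (choice of $f_1,\ldots,f_r$ with a non-vanishing $r\times r$ minor, the Zariski neighbourhood where that minor is non-zero, and the implicit function theorem producing the $(n-r)$-manifold $M$) is fine and matches the template of Proposition~3.2.9 in \cite{BR}, which is all the paper itself invokes. The problem is the core step, the germ equality $M\subset V$ near $\x$. Your argument is: if some generator $f_j\in I(V)$, $j>r$, does not vanish identically on $M$ near $\x$, then $df_j(\x)$ lies outside the span of $df_1(\x),\ldots,df_r(\x)$, so ${\rm rank}\,(V,\x)\ge r+1$. That implication is false. A function can be non-zero on $M$ arbitrarily close to $\x$ and still vanish at $\x$ to order two along $M$; e.g.\ with $M=\{X_n=0\}$ and $f_1=X_n$, a generator whose restriction to $M$ behaves like $X_1^2$ has $df_j(\x)$ inside the span (it annihilates $T_{\x}M$), so the Jacobian of $(f_1,\ldots,f_r,f_j)$ still has rank $r$ and no contradiction with ${\rm rank}\,(V,\x)=r$ appears. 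Worse, the scenario your proof must exclude --- $V$ a proper subset of $M$ near $\x$ --- is exactly the one in which, by the very definition of $r$ as the supremum of ${\rm rank}\,(V,\cdot)$, \emph{every} element of $I(V)$ has differential at $\x$ lying in that span; so your dichotomy never bites in the only dangerous case, and the argument is circular rather than a proof.

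The auxiliary ``dimension-count'' you gesture at is also unavailable at this point of the paper: the dimension of an exponential set is only defined (Definition~\ref{def:dimension}) \emph{after} and \emph{by means of} this lemma, and no relation between ${\rm rank}\ I(V)$ and any previously defined dimension of $V$ has been established. In the algebraic case this is precisely where the proof in \cite{BR} does real work: for the real prime ideal $I(V)$ of an irreducible set one knows that the generic Jacobian rank equals the height of the ideal, hence equals the codimension of $V$, and then density of $V$ in $M$ plus analytic continuation (the generators are analytic and vanish on a subset of the connected manifold $M$ with non-empty interior) gives $M\subset V$ locally. Some substitute for that dimension/height input --- e.g.\ transporting the problem through the isomorphism $E$ of Lemma~\ref{le:isomorphism} to the polynomial ring in $2n$ variables and arguing on the graph of the exponential --- is needed; a first-order Jacobian comparison at the single point $\x$ cannot deliver it.
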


Remark~\ref{re:partition} and Lemma~\ref{le:3.2.9} allow to define the notion of the {\em dimension} of an exponential set.

\begin{definition}\label{def:dimension}
Let $V \subset \Real^n$ be an exponential set.
\begin{enumerate}
\item The {\em dimension} of  $V $, denoted by $\dim V$, is the maximal dimension of
real analytic manifolds comprising the non-singular stratification of $V$.
The dimension $\dim ({\rm Reg}\ (V))$ of the set of regular points of $V$ is its dimension as a real analytic manifold.
\item If $\dim V=n-1$, we call $V$ a {\em hypersurface} in $\Real^n$.
\item For  $\x \in V$ we denote by  $\dim_{\x} V$ the {\em local dimension} of $V$ at $\x$, i.e.,
the maximal dimension of real analytic manifolds
which are intersections of elements of the non-singular stratification of $V$ with an Euclidean neighbourhood of $\x$ in $\Real^n$.
\end{enumerate}
\end{definition}

For an exponential set $V \subset \Real^n$ one can also introduce the analogy of the Krull dimension as follows.

\begin{definition}
The {\em Krull dimension} of $V$, denoted by $\dim_K (V)$, is  largest $d\in\N$ such that
there is a filtration $V_0 \subset V_1 \subset \cdots \subset V_d$ of pair-wise distinct irreducible exponential subsets of $V$.
\end{definition}

Unlike the case of real or complex {\em algebraic} sets, $\dim (V)$ does not necessarily coincide with
$\dim_K (V)$ as is shown in the following example.

\begin{example}
Consider the irreducible two-point exponential set
$$V:=\ZZ(2X-e^X+1) \subset \Real^1$$
from Example~\ref{ex:irred}.
Observe that $T:= \ZZ(X)$ is also irreducible.
Since we have $T\subsetneq V$ it follows that $\dim_K V=1$, while $\dim V=0$.
Further, we can conclude from   $\dim_K V=1$ that  $\dim_K \Real^1 \ge 2$.
On the other hand $\dim_K \Real^1 \le 2$, since  every nonempty irreducible exponential subset in $\Real^1$,
consisting of a finite number of points, having a point different from $T$, and properly containing another such set,
would contain an algebraic point different from $T$, which contradicts  Lindemann's theorem.
So we conclude that  $\dim_K \Real^1 = 2$.
Note that every exponential set in $\Real^1$ which is irreducible over $\Real$ (rather than over $\mathbb K$)
is an irreducible {\em algebraic} set over $\Real$ (actually, a singleton).
Hence, the Krull dimension of $\Real^1$ is 1 in this case.
\end{example}

In view of this example we emphasize that in the sequel we will be using the concept of  dimension
exclusively in the sense of Definition~\ref{def:dimension}.

We can also observe that unlike the case of real or complex {\em algebraic} sets,
the dimension of an exponential set $V$ may coincide with the dimension of its singular locus ${\rm Sing}\ (V)$
as shown in the following example.

\begin{example}
Let $V:= \ZZ\left((2X-e^X+1)(3X-e^X+1)\right) \subset \Real^1$.
Clearly, $ T:=\ZZ(X) \subset {\rm Sing}\ (V)$ (in fact, $ T = {\rm Sing}\ (V)$) but
$\dim (T)= \dim (V)$.
\end{example}

\begin{lemma}\label{le:ineq}
For any exponential set $V$ one has $\dim ({\rm Sing}\ (V)) \le \dim ({\rm Reg}\ (V))$.
\end{lemma}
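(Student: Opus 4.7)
The plan is to reduce to the case where $V$ is irreducible and then bound the dimension of each irreducible component of $\mathrm{Sing}(V)$ by $n - r$, where $r = \mathrm{rank}\, I(V)$. Decomposing $V = V_1 \cup \cdots \cup V_t$ into irreducible components, one has
\[
\mathrm{Reg}(V) \;=\; \bigsqcup_{i=1}^{t} \Bigl( V_i \setminus \bigcup_{j \neq i} V_j \Bigr),
\qquad
\mathrm{Sing}(V) \;=\; \bigcup_{i<j}(V_i \cap V_j).
\]
Since each $V_i$ is not contained in the union of the others, each intersection $V_i \cap V_j$ is a proper exponential subset of the irreducible set $V_i$. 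Granting the strengthened irreducible case---that every proper exponential subset $W \subsetneq V'$ of an irreducible $V'$ satisfies $\dim W \le \dim \mathrm{Reg}(V')$---the reducible statement then follows, using that removing a proper exponential subset from the manifold $\mathrm{Reg}(V_i)$ does not decrease its dimension.

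For the irreducible case, Lemma~\ref{le:3.2.9} gives $\dim \mathrm{Reg}(V') = n - r$. For any irreducible proper exponential subset $W \subsetneq V'$, the same lemma gives $\dim W = n - r_W$ with $r_W = \mathrm{rank}\, I(W)$, so the task reduces to showing $r_W \ge r$. I would fix a regular point $\y \in \mathrm{Reg}(W)$ and compute the rank of the Jacobian of a generating set of $I(W)$ at $\y$. The ideal $I(W)$ contains $I(V')$; when $W \subset \mathrm{Sing}(V')$, it also contains every $r \times r$ minor of the Jacobian of $I(V')$'s generators, since those minors vanish on $\mathrm{Sing}(V') \supset W$. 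At $\y$, the Jacobian of $I(V')$ has some rank $\rho \le r - 1$, while the gradients of the $r \times r$ minors at $\y$ are, by the Jacobi formula, linear combinations (with $(r-1) \times (r-1)$ cofactors as coefficients) of second-order partial derivatives of $I(V')$'s generators. A transversality count should show these gradients contribute at least $r - \rho$ additional independent directions, yielding $r_W \ge r$.

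The main obstacle is this transversality count, particularly when the rank defect $r - \rho$ exceeds one; I expect to handle it by induction, successively passing to subvarieties where higher-order minors vanish and extracting fresh independent gradients at each step. A complementary sub-case arises when $W$ is not entirely contained in $\mathrm{Sing}(V')$: there $W \cap \mathrm{Reg}(V')$ is a proper Zariski-closed subset of the analytic manifold $\mathrm{Reg}(V')$ and hence has dimension strictly less than $n - r$ by real analyticity, while the dimension of $W \cap \mathrm{Sing}(V')$ is controlled by induction on the length of the filtration~(\ref{eq:singfilt}).
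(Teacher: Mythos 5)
Your overall skeleton (handle the irreducible case first, then reduce the reducible case to pairwise intersections of components, with an induction along the filtration~(\ref{eq:singfilt}) in the background) matches the paper, and your treatment of the reducible case is essentially the paper's. The genuine gap is in the irreducible case, at exactly the step you flag yourself. The paper obtains the key inequality $\mathrm{rank}\ I(\mathrm{Sing}\,(V)) \ge \mathrm{rank}\ I(V)$ directly from the containment $I(\mathrm{Sing}\,(V)) \supset I(V)$ and then only needs Lemma~\ref{le:3.2.9} and the filtration induction; you instead try to prove a \emph{pointwise} rank inequality $\mathrm{rank}\,(W,\y) \ge r$ at a regular point $\y$ of $W \subset \mathrm{Sing}\,(V')$ by adjoining the $r\times r$ minors of the Jacobian of $I(V')$ and counting independent gradients via second derivatives. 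That ``transversality count'' is not a routine verification you may defer: as a pointwise linear-algebra statement it is simply false. Already in the algebraic model $f = X_1^3 - X_2^4$ in the plane ($r=1$, $W=\{\mathbf 0\}\subset \mathrm{Sing}\,\ZZ(f)$), the $1\times 1$ minors are $f_{X_1}, f_{X_2}$ and their gradients at the origin vanish (the Hessian of $f$ is zero there), so your first round contributes $0$ of the required $r-\rho = 1$ directions; the elements of $I(W)$ that actually carry the missing rank ($X_1$ and $X_2$) are not reachable as derivatives or minors of elements of $I(V')$ in any evident finite scheme, and your proposed iteration through ``higher-order minors'' comes with no argument that it terminates with the needed independent gradients. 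So the central inequality $r_W \ge r$ remains unproven in your write-up, and the specific mechanism you propose for it does not work as stated.

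Two smaller points. First, your appeal to Lemma~\ref{le:3.2.9} to write $\dim W = n - r_W$ is circular: that lemma gives only $\dim \mathrm{Reg}\,(W) = n - r_W$, and identifying this with $\dim W$ is precisely the content of Lemma~\ref{le:ineq} applied to $W$; this is repairable, as in the paper, by organizing the whole argument as an induction on the length of the filtration~(\ref{eq:singfilt}), but you should say so. Second, in your sub-case where $W \not\subset \mathrm{Sing}\,(V')$, the claim that $W \cap \mathrm{Reg}\,(V')$ has dimension \emph{strictly} less than $n-r$ ``by real analyticity'' is not justified (an exponential subset can contain whole connected components of $\mathrm{Reg}\,(V')$ without containing all of it); fortunately only the non-strict bound is needed there.
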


\begin{proof}
Assume first that $V$ is irreducible.
We prove the statement by induction on the length of the filtration (\ref{eq:singfilt}).
If (\ref{eq:singfilt}) consists of a single exponential set, $V$, then this set is non-singular, so the base of induction is true.

Since, by Lemma~\ref{le:contained}, $I({\rm Sing}\ (V)) \supset I(V)$, we conclude that
$${\rm rank}\ I({\rm Sing}\ (V)) \ge {\rm rank}\ I(V).$$
By Lemma~\ref{le:3.2.9},
$$\dim ({\rm Reg}\ (V))= n- {\rm rank}(I(V))$$
and
$$\dim ({\rm Reg}\ ({\rm Sing}\ (V)))= n- {\rm rank}(I({\rm Sing}\ (V))_),$$
hence
\begin{equation}\label{eq:one}
\dim ({\rm Reg}\ (V)) \ge \dim ({\rm Reg}\ ({\rm Sing}\ (V))).
\end{equation}

By the inductive hypothesis,
$$
\dim ({\rm Reg}\ ({\rm Sing}\ (V))) \ge \dim ({\rm Sing}\ ({\rm Sing}\ (V))),
$$
which, together with the previous inequality, implies that
\begin{equation}\label{eq:two}
\dim ({\rm Reg}\ (V)) \ge \dim ({\rm Sing}\ ({\rm Sing}\ (V))).
\end{equation}

Since
$$
\dim ({\rm Sing}\ (V))= \max \{ \dim ({\rm Reg}\ ({\rm Sing}\ (V))),\ \dim ({\rm Sing}\ ({\rm Sing}\ (V))) \},
$$
we conclude from (\ref{eq:one}) and (\ref{eq:two}) that
$\dim ({\rm Reg}\ (V)) \ge \dim ({\rm Sing}\ (V))$.

Now suppose that $V$ is reducible and $V^{(1)},\ V^{(2)}$ are two of its irreducible components.
Let $V^{(3)}= V^{(1)} \cap V^{(2)}$.
Since $\dim \left( V^{(3)} \right) \le \min \{ \dim \left( V^{(1)} \right),\ \dim \left( V^{(2)} \right) \}$
and, by the first half of the proof,
$$\dim \left( V^{(1)} \right)= \dim \left( {\rm Reg}\ \left( V^{(1)} \right) \right),\> \dim \left( V^{(2)} \right)
= \dim \left( {\rm Reg}\ \left( V^{(2)} \right) \right),$$
we get
$\dim \left( V^{(3)} \right) \le \dim ({\rm Reg}\ (V))$.
\end{proof}

\begin{corollary}\label{cor:local}
Let $\dim (V) =r$ and $\dim_{\x} (V) <r$ for some $\x \in V$.
Then $\x \in {\rm Sing}\ (V)$.
\end{corollary}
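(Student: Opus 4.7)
The plan is to prove the contrapositive: supposing $\x \in {\rm Reg}\ (V)$, I will show that $\dim_{\x}(V) = \dim(V)$, contradicting the hypothesis $\dim_{\x}(V) < r$.

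I first dispose of the case where $V$ is irreducible. Lemma~\ref{le:3.2.9} produces a Zariski (and \emph{a fortiori} Euclidean) neighborhood $U$ of $\x$ in $\Real^n$ on which $V \cap U$ is a real analytic submanifold of dimension $n - {\rm rank}\ I(V)$; so $\dim_{\x}(V) = n - {\rm rank}\ I(V)$ by Definition~\ref{def:dimension}(3). To match this with $\dim(V)$, I invoke Remark~\ref{re:partition}: $\dim(V)$ is the largest stratum dimension in the non-singular stratification. The top stratum ${\rm Reg}\ (V)$ has dimension $n - {\rm rank}\ I(V)$ again by Lemma~\ref{le:3.2.9}; for each subsequent stratum, iterating Lemma~\ref{le:ineq} along the filtration (\ref{eq:singfilt}) yields the inductive bound
$$\dim({\rm Reg}({\rm Sing}(W))) \le \dim({\rm Sing}(W)) \le \dim({\rm Reg}\ (W))$$
at each stage $W$ of the chain, so every stratum has dimension at most $\dim({\rm Reg}\ (V))$. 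Hence $\dim(V) = n - {\rm rank}\ I(V) = \dim_{\x}(V)$.

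For reducible $V$, regularity at $\x$ means $\x$ lies in a unique irreducible component $V_i$. Since every other component $V_j$ is an exponential set, hence Euclidean closed in $\Real^n$ (being the zero set of continuous $E$-polynomials), and $\x \notin V_j$, I can pick a Euclidean neighborhood $W$ of $\x$ disjoint from $\bigcup_{j \ne i} V_j$. Then $V \cap W = V_i \cap W$, giving $\dim_{\x}(V) = \dim_{\x}(V_i)$, and the irreducible case applied to $V_i$ supplies $\dim_{\x}(V_i) = \dim(V_i)$.

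The main obstacle is the last identification $\dim(V_i) = \dim(V)$ in the reducible case: this requires that $\x$ lie in a top-dimensional component, which does not follow from the definition of regular point alone. Accordingly, the corollary is effectively an equidimensional statement, matching the setting in which it will be used in the sequel (notably the irreducible $\ZZ(P)$ of Theorem~\ref{thm:main}, where the hypothesis collapses the two cases into the first).
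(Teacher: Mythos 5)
Your handling of the irreducible case is, in substance, the paper's own proof: the authors argue by contradiction rather than contraposition, but they use exactly the same two ingredients, namely the constancy of $\dim_{\x}(V)$ on ${\rm Reg}\,(V)$ coming from Lemma~\ref{le:3.2.9} and the equality $\dim({\rm Reg}\,(V))=\dim(V)$ coming from Lemma~\ref{le:ineq}, so that part needs no further comment.

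Where you diverge is the reducible case, and the obstacle you flag is genuine rather than a defect of your argument. The paper's proof disposes of this case by asserting, with a citation of Lemma~\ref{le:3.2.9}, that $\dim_{\x}(V)$ is the same at every point of ${\rm Reg}\,(V)$; but that lemma is stated (and true) only for irreducible $V$, and under Definition of regularity for reducible sets (membership in exactly one irreducible component) the assertion fails whenever $V$ has a low-dimensional component disjoint from the top-dimensional ones. Concretely, $V=\ZZ\bigl(X_2^2\,((X_1-1)^2+(X_2-1)^2)\bigr)\subset\Real^2$ has the two disjoint irreducible components $\ZZ(X_2)$ (irreducible as an exponential set by Corollary~\ref{cor:algebr}) and the singleton $\{(1,1)\}$, so ${\rm Sing}\,(V)=\emptyset$, while $\dim_{(1,1)}(V)=0<1=\dim(V)$. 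This is precisely the configuration you isolate: the missing identification $\dim(V_i)=\dim(V)$ is not a gap you could have closed, and the corollary as literally stated requires either that $\x$ lie in a top-dimensional component (e.g.\ equidimensionality of $V$) or a strengthened notion of singular point that places such stray components inside ${\rm Sing}\,(V)$; note that the same issue then propagates to the filtration argument in the proof of Theorem~\ref{th:subset}. Two smaller remarks: your reduction to the unique component $V_i$ also tacitly uses that $\x$ is a regular point of $V_i$, which Definition (2) does not grant (it only gives uniqueness of the component), although the example above shows the statement fails even if that is granted; and in the irreducible case your appeal to Remark~\ref{re:partition} plus iterated Lemma~\ref{le:ineq} is exactly what the single invocation of Lemma~\ref{le:ineq} already encodes, so nothing extra is gained there.
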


\begin{proof}
Let, contrary to the claim, $\x \in {\rm Reg}\ (V)$.
By Lemma~\ref{le:3.2.9}, $\dim_{\x} (V)$ is the same at every point in ${\rm Reg}\ (V)$.
Since, by Lemma~\ref{le:ineq}, $\dim {\rm Reg}\ (V) = \dim (V)$, we conclude that $\dim_{\x} (V)=r$
which is a contradiction.
\end{proof}

\begin{definition}\label{def:set_of _local_dim}
For an exponential set $V$ and $0 \le p \le \dim (V)$ denote $V_p:= \{ \x \in V|\> \dim_{\x} (V)=p \}$.
\end{definition}

\begin{theorem}\label{th:subset}
Let $V_p \neq \emptyset$ for some $0 \le p \le \dim (V)$.
There is an exponential subset $W \subset V$ such that $\dim (W)=p$ and $V_\ell \subset W$ for all $0 \le \ell \le p$.
\end{theorem}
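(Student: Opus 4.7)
The plan is to take $W := \overline{V_{\le p}}^{\mathrm{Zar}}$, the Zariski closure in $\R^n$ of $V_{\le p} := V_0 \cup \cdots \cup V_p$. Then $W$ is an exponential set (by Corollary~\ref{le:topology}) contained in $V$ and visibly containing every $V_\ell$ with $\ell \le p$, so the theorem reduces to proving $\dim(W) = p$, which I split into the two inequalities.

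For the upper bound $\dim(W) \le p$ (the main obstacle), I argue by contradiction. Let $W = W^{(1)} \cup \cdots \cup W^{(s)}$ be the irreducible decomposition and suppose some component $W^{(j)}$ satisfies $\dim(W^{(j)}) > p$. Minimality of the Zariski closure forces $V_{\le p} \cap W^{(j)}$ to be Zariski-dense in $W^{(j)}$: if $Z := \overline{V_{\le p} \cap W^{(j)}}^{\mathrm{Zar}} \subsetneq W^{(j)}$, then by irreducibility of $W^{(j)}$ the set $W^{(j)}$ is not covered by $Z$ together with the other $W^{(i)}$, so replacing $W^{(j)}$ with $Z$ in the union produces a strictly smaller exponential set still containing $V_{\le p}$, contradicting the definition of the Zariski closure. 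Now pick any $x \in V_{\le p} \cap W^{(j)}$; since $W^{(j)} \subseteq V$, every real analytic manifold in the non-singular stratification of $W^{(j)}$ near $x$ sits inside $V$ and is therefore covered by the strata of $V$ near $x$, each of dimension at most $\dim_x(V)$. This forces $\dim_x(W^{(j)}) \le \dim_x(V) \le p < \dim(W^{(j)})$, so Corollary~\ref{cor:local} applied inside the irreducible set $W^{(j)}$ yields $x \in \mathrm{Sing}(W^{(j)})$. Hence $V_{\le p} \cap W^{(j)} \subseteq \mathrm{Sing}(W^{(j)})$, and passing to the Zariski closure (using that $\mathrm{Sing}(W^{(j)})$ is already exponential) gives $W^{(j)} \subseteq \mathrm{Sing}(W^{(j)})$, contradicting Lemma~\ref{le:contained}.

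For the lower bound $\dim(W) \ge p$, I exhibit a $p$-dimensional real analytic manifold sitting inside $W$. The key auxiliary observation is upper semi-continuity of local dimension: if $z \in V$ and $y \in V$ lies in a small enough Euclidean neighbourhood of $z$, then every stratum of the non-singular stratification of Remark~\ref{re:partition} meeting a small neighbourhood of $y$ also meets a neighbourhood of $z$, so $\dim_y(V) \le \dim_z(V)$. Since $V_p \ne \emptyset$, pick $x \in V_p$; by Definition~\ref{def:dimension} some stratum $M$ of the non-singular stratification has a $p$-dimensional piece in every neighbourhood of $x$. Choose $y$ on such a $p$-dimensional piece sufficiently close to $x$: semi-continuity gives $\dim_y(V) \le p$, while membership of $y$ in the $p$-dimensional stratum $M$ gives $\dim_y(V) \ge p$, so $y \in V_p$. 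Reapplying semi-continuity along $M$ shows that a relative Euclidean neighbourhood of $y$ in $M$ is entirely contained in $V_p \subseteq V_{\le p} \subseteq W$, producing the desired $p$-dimensional real analytic manifold piece of $W$ and hence $\dim(W) \ge p$.
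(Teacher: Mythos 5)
Your proof is correct, but it takes a genuinely different route from the paper. The paper finds $W$ \emph{inside the filtration} (\ref{eq:singfilt}): it shows (via Corollary~\ref{cor:local}, iterated) that the points of local dimension $\le p$ survive each passage $S_i \mapsto {\rm Sing}\,(S_i)$ as long as $\dim (S_i) > p$, and takes $W$ to be the first set of the chain of dimension exactly $p$; there the containments $V_\ell \subset W$ and the equality $\dim (W)=p$ come almost for free, because the non-singular stratification of each $S_i$ is a sub-collection of the strata of $V$, so no comparison of local dimensions between $V$ and an unrelated subset is ever needed. You instead take $W$ to be the Zariski closure of $V_0 \cup \cdots \cup V_p$, prove $\dim (W) \le p$ by showing that on any too-big irreducible component $W^{(j)}$ the set $V_{\le p}\cap W^{(j)}$ is Zariski dense yet contained in ${\rm Sing}\,(W^{(j)})$ (Corollary~\ref{cor:local} plus Lemma~\ref{le:contained}), and prove $\dim (W) \ge p$ via upper semicontinuity of $\x \mapsto \dim_{\x}(V)$. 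This is sound, but note that twice you silently invoke the fact that a $d$-dimensional real analytic manifold cannot be covered by finitely many manifolds of dimension $<d$ (once to get $\dim_{\x}(W^{(j)}) \le \dim_{\x}(V)$, once to convert the $p$-dimensional manifold piece in $W$ into $\dim (W)\ge p$); this is a standard Baire-category/invariance-of-domain fact and is consistent with the paper's level of rigor, but it is exactly the kind of comparison the paper's choice of $W$ is engineered to avoid. What your approach buys in exchange is a canonical, minimal $W$ (the smallest exponential set containing $V_0\cup\cdots\cup V_p$), which is a mild strengthening of the statement; the paper's $W$ is generally larger but requires essentially no extra machinery.
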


\begin{proof}
We prove that $W$ can be found among the sets of the filtration defined in  (\ref{eq:singfilt}).
If $p= \dim (V)$ we take $W=V$.
Let $p < \dim (V)$.
By Corollary~\ref{cor:local},  $V_p \subset {\rm Sing}\ (V)$.
Passing from an exponential set $S_i$ in the chain (\ref{eq:singfilt}) to the next one on the right, $S_{i+1}$,
consists of removing from $S_i$ an equidimensional subset ${\rm Reg}\ (S_i)$ having the highest dimension, $\dim (S_i)$.
On the other hand, (\ref{eq:singfilt}) cannot consist only of sets of dimension strictly greater than $p$, since
the last set in (\ref{eq:singfilt}) is a non-singular equidimensional set, while $V_p \neq \emptyset$.
Hence, there is the first (from the left) set in the filtration having the dimension $p$, which can be taken as $W$.
\end{proof}

\begin{remark}\label{re:less_exp}
For an integer $k$ such that $0 \le k \le n$, let
$$\X_k:= (X_1, \ldots , X_k),\> \U_k:= (U_1, \ldots ,U_k),\> \text{and}\> e^{\X_k} := (e^{X_1}, \ldots ,e^{X_k}).$$
All definitions in this section can be extended to the ring of functions ${\mathbb K}[\X, e^{\X_k}]$ (polynomials in
$n$ variables $X_1, \ldots ,X_n$ and $k$ exponentials $e^{X_1}, \ldots ,e^{X_k}$).
In particular, we consider Zariski topology in $\Real^n$ with all closed sets of the kind $\ZZ (f)$, where
$f \in {\mathbb K}[\X, e^{\X_k}]$.
If $\ZZ (f)$ is irreducible with respect to this topology, we will say that it is
{\em irreducible in} ${\mathbb K}[\X, e^{\X_k}]$.
It is easy to check that all statements in this section, including Theorem~\ref{th:subset}, hold true for an arbitrary
fixed $k$.
\end{remark}

\section{Case of a single exponential}\label{sec:one_exp}

In this section we consider the case of exponential sets that involve only a single exponential,
i.e., exponential sets defined by $E$-polynomials
$f=P(X_1, \ldots , X_n, e^{X_1})$ with $P \in {\mathbb K}[\X, U_1]$.
We denote  $V:=\ZZ(f)$ and  $m:=\dim(V)$.
Let $\pi: \> \Real^{n+1} \to \Real^{n}$ be the projection map along $U_1$.

\begin{definition}
A real algebraic set $W \subset \Real^{n+1}$ is called {\em admissible} for $V$ if
$$V= \pi \left( \left( W \cap  \ZZ \left( U_1-e^{X_1} \right) \right) \cup \ZZ(P, X_1, U_1-1) \right)$$
and $\dim (W) \le m+1$.
\end{definition}

\begin{lemma}
There exists an admissible set for $V$.
\end{lemma}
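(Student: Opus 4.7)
The plan is to obtain $W$ from $\ZZ(P)$ by trimming away irreducible algebraic components of excessive dimension.

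First I would observe that the naive choice $W_0 := \ZZ(P)$ already satisfies the projection equation of admissibility: $\pi$ restricted to $\ZZ(U_1 - e^{X_1})$ is a bijection onto $\Real^n$, and $(\x, u_1) \in \ZZ(P) \cap \ZZ(U_1 - e^{X_1})$ iff $u_1 = e^{x_1}$ and $f(\x) = P(\x, e^{x_1}) = 0$, iff $\x \in V$. The auxiliary set $\ZZ(P, X_1, U_1 - 1)$ is automatically contained in $\ZZ(P) \cap \ZZ(U_1 - e^{X_1})$, since $X_1 = 0$ forces $e^{X_1} = 1 = U_1$. Thus the only possible obstruction to admissibility of $W_0$ is the dimension bound, which can indeed fail: e.g., $P = U_1 + X_1^2 + 1$ gives $\dim \ZZ(P) = n$ while $V = \emptyset$.

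To enforce $\dim(W) \le m + 1$, I would decompose $\ZZ(P) = Z_1 \cup \cdots \cup Z_s$ into its $\mathbb{K}$-irreducible algebraic components (legitimate by Noetherianity of $\mathbb{K}[\X, U_1]$), keep each $Z_i$ with $\dim(Z_i) \le m + 1$ as is, and replace each $Z_i$ with $\dim(Z_i) > m + 1$ by the $\mathbb{K}$-Zariski closure $W_i$ of the exponential set $A_i := Z_i \cap \ZZ(U_1 - e^{X_1})$ in $\Real^{n+1}$. Define $W$ as the union of the retained small components and the $W_i$'s. The projection equation is preserved: each $W_i \subseteq Z_i$ gives $W \cap \ZZ(U_1 - e^{X_1}) \subseteq \ZZ(P) \cap \ZZ(U_1 - e^{X_1})$, while $W_i \supseteq A_i$ by construction forces $W \cap \ZZ(U_1 - e^{X_1}) \supseteq A_i$ for every $i$, so the two intersections coincide and their projection is $V$.

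The main obstacle is then the dimension bound on each $W_i$ arising from a trimmed component. Since $\pi$ is injective on $\ZZ(U_1 - e^{X_1})$ and sends $A_i$ into $V$, we have $\dim(A_i) \le m$; the heart of the argument is therefore the claim $\dim(W_i) \le \dim(A_i) + 1$, i.e.\ that the $\mathbb{K}$-Zariski closure of an exponential subset of the graph $\{U_1 = e^{X_1}\}$ raises the dimension by at most one. This reflects the intuition that, although $\X$ and $e^{X_1}$ are algebraically independent over $\mathbb{K}$ (Lemma~\ref{le:isomorphism}), passing from the transcendental relation $U_1 = e^{X_1}$ to an algebraic relation between $U_1$ and $\X$ adjoins exactly one new transcendence direction. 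I would prove it by using Lemma~\ref{le:isomorphism} to identify the vanishing ideal of $A_i$ in $\mathbb{K}[\X, U_1]$ with $E^{-1}(I(\pi(A_i)))$, and then bounding the dimension (in the sense of Definition~\ref{def:dimension}) of its zero set by $\dim(\pi(A_i)) + 1$ via a transcendence-degree argument that accounts for the single algebraic direction $U_1$ gained on the graph by passing from exponential to algebraic closure.
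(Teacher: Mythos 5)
Your reduction is fine as far as it goes: the projection identity for $W_0=\ZZ(P)$, the observation that $\ZZ(P,X_1,U_1-1)\subset\ZZ(P)\cap\ZZ(U_1-e^{X_1})$, the component-wise trimming, and the bound $\dim(A_i)\le m$ (via injectivity of $\pi$ on the graph) are all correct. But the entire difficulty of the lemma is concentrated in the one step you leave as a sketch: the claim that the ${\mathbb K}$-Zariski closure $W_i$ of $A_i=Z_i\cap\ZZ(U_1-e^{X_1})$ satisfies $\dim(W_i)\le\dim(A_i)+1$. The ``one new transcendence direction'' heuristic does not prove this, and as a general principle it is false: a subset of the graph of very small dimension can perfectly well be Zariski dense in a high-dimensional algebraic set (a countable Euclidean-dense subset of the graph has Zariski closure $\Real^{n+1}$), so nothing about transcendence degree alone rules out that $A_i$, while only $p$-dimensional, is Zariski dense in a set of dimension much larger than $p+1$. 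The concrete danger for sets of the form $Z_i\cap\ZZ(U_1-e^{X_1})$ is tangential or otherwise degenerate intersection: $A_i$ can have dimension far below $\dim(Z_i)-1$, and one must show that in that case $A_i$ is trapped inside a proper algebraic subset (the tangency locus), and that this locus cannot be all of $Z_i$ because that would force an identity of the type $\partial F_1/\partial X_1=-F_1$, i.e.\ a semialgebraic function equal to $c\,e^{-X_1}$ --- which is exactly the stratification-plus-ODE argument of Theorem~\ref{th:transverse} in the Appendix. Equally, your proposed identification of $I(A_i)$ with $E^{-1}(I(\pi(A_i)))$ is correct but only restates the problem: bounding the dimension of the real zero set of that ideal in terms of the \emph{analytic} dimension of $\pi(A_i)$ is precisely the content that needs proving.

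So the gap is a missing proof of the key dimension inequality, not a presentational issue. The paper gets this inequality (in the form it needs) from Theorem~\ref{th:transverse}: away from $\ZZ(X_1)$, every point of $V_p$ lifts to a point of $\ZZ(P)$ of local dimension exactly $p+1$, so one can take for $W$ an algebraic set (supplied by the algebraic analogue of Theorem~\ref{th:subset}) containing all points of $\ZZ(P)\setminus\ZZ(X_1)$ of local dimension at most $m+1$; the locus $X_1=0$ is then covered by the term $\ZZ(P,X_1,U_1-1)$. Your closure-of-components construction would also work, but only after the same transversality input: applying Theorem~\ref{th:transverse} to $W_i$ (which is defined over ${\mathbb K}$ and satisfies $W_i\cap\ZZ(U_1-e^{X_1})=A_i$) at a point of $A_i$ off $\ZZ(X_1)$ of maximal local dimension of $W_i$ yields $\dim(W_i)\le\dim(A_i)+1$, with the case of components inside $\ZZ(X_1)$ handled separately via $U_1=1$. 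In other words, your route is viable but does not bypass the Appendix (or the reference \cite{RV94}); as written, it asserts the hard step instead of proving it.
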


\begin{proof}
A proof of the statement immediately follows from \cite[Section~7]{RV94}.

Alternatively, if the set
\begin{equation}\label{eq:index}
\{ \ell \in {\mathbb Z}|\> 0 \le \ell \le m+1,\> (\ZZ (P) \setminus \ZZ(X_1))_\ell \neq \emptyset \}
\end{equation}
is non-empty, then let  $r$ denote its maximal element.
By Theorem~\ref{th:subset} (or its easier version for algebraic sets), there is an
$r$-dimensional algebraic set $W \subset\ZZ(P)$ containing the semialgebraic set $(\ZZ(P))_r$
as well as all sets $(\ZZ(P))_\ell$ where $0 \le \ell \le r$.
If the set (\ref{eq:index}) is empty, assume $W=\emptyset$.
Then $W$ is an admissible set for $V$, according to Theorem~\ref{th:transverse} in the Appendix.
\end{proof}

\begin{lemma}\label{le:oneexp}
Let $W^{(1)}, \ldots ,W^{(t)}$ be all irreducible components of an admissible set $W$ for $V$.
Let $A$ be an $m$-dimensional irreducible component of $V$.
Then, either $A$ coincides with $\pi (W^{(i)} \cap \ZZ(U_1-e^{X_1}))$ for some $1\leq i\leq t$, or
$A$ is the union of an $m$-dimensional algebraic subset of $ \ZZ(X_1)$ and a (possibly empty) set of points having local
dimensions less than $m$.
\end{lemma}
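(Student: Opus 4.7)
My plan is to use the admissibility of $W$ to decompose $V$ into a finite union of closed exponential subsets and then localize the irreducible component $A$ inside one of these pieces. For each irreducible algebraic component $W^{(i)}$ of $W$ put $B_i := \pi(W^{(i)} \cap \ZZ(U_1 - e^{X_1}))$, and set $V' := \pi(\ZZ(P, X_1, U_1 - 1))$. Since $e^{X_1} = 1$ on $\ZZ(X_1)$, the set $V'$ coincides with $V \cap \ZZ(X_1)$ and is therefore a real algebraic subset of $\ZZ(X_1)$; each $B_i$ is an exponential set obtained by substituting $U_1 \mapsto e^{X_1}$ into a polynomial ideal defining $W^{(i)}$. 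Admissibility then reads $V = V' \cup \bigcup_{i=1}^t B_i$.

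Intersecting this decomposition with the irreducible set $A$ yields $A = (A \cap V') \cup \bigcup_i (A \cap B_i)$, a finite covering of $A$ by closed exponential subsets of $A$. Irreducibility of $A$ forces equality with one of these pieces, so either $A \subset V'$, or $A \subset B_i$ for some $i \in \{1, \ldots, t\}$.

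If $A \subset V' \subset \ZZ(X_1)$, then every $E$-polynomial vanishing on $A$ restricts on $A$ to an ordinary polynomial in $X_2, \ldots, X_n$, so $A$ is itself a real algebraic subset of $\ZZ(X_1)$. Partitioning $A$ by local dimension, its $m$-dimensional stratum $\{\x \in A : \dim_\x A = m\}$ is the claimed $m$-dimensional algebraic subset of $\ZZ(X_1)$, and the remaining points have local dimension strictly less than $m$; this matches the second conclusion of the lemma.

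If $A \subset B_i$, then the chain $A \subset B_i \subset V$ with $\dim A = \dim V = m$ forces $\dim B_i = m$, and the delicate step is to upgrade $A \subset B_i$ to the equality $A = B_i$. My approach is to combine two facts: first, $\pi$ restricted to $\ZZ(U_1 - e^{X_1})$ is a homeomorphism onto $\Real^n$, so $B_i$ is homeomorphic to the real analytic trace $W^{(i)} \cap \ZZ(U_1 - e^{X_1})$; second, the admissibility / transversality supplied by Theorem~\ref{th:transverse} in the Appendix ensures that outside a lower-dimensional subset this intersection is smooth and transverse, with its local structure inherited from $W^{(i)}$. The main obstacle is precisely this equality: algebraic irreducibility of $W^{(i)}$ does not by itself force exponential irreducibility of $B_i$, and the argument must route through the observation that any putative further $E$-polynomial separating $B_i$ would, via a Lindemann-type transcendence argument of the kind used in Lemma~\ref{le:isomorphism}, induce a separation of $W^{(i)}$ over $\mathbb K$, contradicting its algebraic irreducibility.
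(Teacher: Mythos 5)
Your reduction step and the case $A\subset V'\subset \ZZ(X_1)$ are fine and agree with the paper: admissibility gives $A=(A\cap V')\cup\bigcup_i (A\cap B_i)$, irreducibility of $A$ places it in $\ZZ(X_1)$ or in some $B_i$, and on $\ZZ(X_1)$ every $E$-polynomial restricts to a polynomial, so in the first case $A$ is an algebraic subset of $\ZZ(X_1)$. The gap is in the remaining case $A\subset B_i$, $A\neq B_i$: you propose to upgrade the inclusion to the equality $A=B_i$, but that equality is false in general, and the second alternative of the lemma exists precisely to cover this situation. Example~\ref{ex:intro} of the paper is a counterexample: there $W=\ZZ(P)$ is irreducible with a single component, $B_1=\pi(\ZZ(P)\cap\ZZ(U_1-e^{X_1}))$ is $\ZZ(X_1)$ together with one transcendental point, and the $1$-dimensional irreducible component $\ZZ(X_1)$ of $V$ is strictly contained in $B_1$ without coinciding with it. Your proposed mechanism also cannot be repaired: reducibility of the exponential set $B_i$ is perfectly compatible with algebraic irreducibility of $W^{(i)}$ (this is the central phenomenon of the whole paper), so no ``separation of $W^{(i)}$ over $\mathbb K$'' can be extracted from an $E$-polynomial cutting $B_i$; moreover Theorem~\ref{th:transverse} applies only at points with $x_1\neq 0$, i.e.\ exactly off the locus $\ZZ(X_1)$ where the second alternative lives.

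What is actually needed in this case (and what the paper does) is different: write $W^{(i)}=\ZZ(R)$ and $A=\ZZ(E(Q))=\pi(\ZZ(Q,U_1-e^{X_1}))$ for $R,Q\in{\mathbb K}[\X,U_1]$. Since $A\subsetneq B_i$, another irreducible component of $V$ lies in $B_i$, which forces $\ZZ(R)\not\subset\ZZ(Q)$; by irreducibility of $\ZZ(R)$ and the admissibility bound $\dim(W)\le m+1$ (a hypothesis your argument never uses) one gets $\dim\ZZ(R,Q)\le m$. The graph $\ZZ(Q,U_1-e^{X_1})$ over $A$ lies inside this $m$-dimensional algebraic set defined over $\mathbb K$, and by Lindemann's theorem it contains no points with algebraic coordinates outside $\ZZ(X_1,U_1-1)$; since algebraic points are dense in the $m$-dimensional part of a ${\mathbb K}$-algebraic set of dimension $m$, the local dimension of the graph is $<m$ at every such point. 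Hence $A$ is the union of the algebraic set $\pi(\ZZ(Q,X_1,U_1-1))\subset\ZZ(X_1)$ and a set of points of local dimension $<m$, which is the second alternative of the lemma rather than $A=B_i$. Without this transcendence-plus-dimension argument your proof does not establish the statement.
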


\begin{proof}
Since $A$ is irreducible, it is either a subset of $\ZZ (X_1)$ or a subset
of the projection $\pi((W^{(i)} \cap \ZZ( U_1-e^{X_1}))$ for a certain $1\leq i\leq t$.
In the first case the proof is completed.
So assume that $A \subset \pi (W^{(i)} \cap \ZZ(U_1-e^{X_1}))$.
If $A = \pi (W^{(i)} \cap \ZZ(U_1-e^{X_1}))$, then the proof is completed.
Suppose now that $A \neq \pi (W^{(i)} \cap \ZZ(U_1-e^{X_1}))$.
Then there exists another irreducible component, $B$, of $V$ such that $B \subset \pi (W^{(i)} \cap \ZZ(U_1-e^{X_1}))$.
Let $W^{(i)}= \ZZ(R) $ and $A= \pi ( \ZZ( Q, U_1-e^{X_1}))$ for some polynomials $R, Q \in {\mathbb K}[\X,U_1]$.
Thus, there are two algebraic sets, $W^{(i)}=\ZZ(R)$ and $\ZZ(Q)$ with a non-empty intersection, and $\ZZ(R)$ is irreducible.
Since we have $B \not\subset \pi (\ZZ(Q))$ it follows that  $\ZZ(R) \not\subset \ZZ( Q)$.
Then, $\dim (\ZZ(R, Q)) < \dim (\ZZ( R))$, hence $\dim (\ZZ( R, Q))=m$.
Therefore, the set $S:= \ZZ(Q, U_1 - e^{X_1})$ is an $m$-dimensional real analytic subset of $m$-dimensional
algebraic set $\ZZ(R, Q)$.

By Lindemann's theorem, the set $S \setminus \ZZ(U_1-1, X_1)$ does not contain points with algebraic coordinates.
Hence, $\dim_{\x} S <m$ at every $\x \in S \setminus \ZZ(U_1-1, X_1)$.
It follows that $A$ is the union of an algebraic set $\pi (\ZZ(Q, U_1-1, X_1))$ and a set of points having local
dimensions less than $m$.
\end{proof}

\begin{corollary}\label{re:codim1}
Let $\ZZ(P)$ be an irreducible algebraic set and let $\dim (V)=m=n-1$.
Then $V$ is either irreducible or every $(n-1)$-dimensional irreducible component of $V$ is the union of the hyperplane
$\ZZ(X_1)$ and a set of points having local dimensions less than $n-1$.
\end{corollary}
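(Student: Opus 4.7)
The plan is to invoke Lemma~\ref{le:oneexp} directly, using $W := \ZZ(P)$ itself as the admissible set. Since $\dim V = n-1$, we must have $P \not\equiv 0$ (otherwise $V = \Real^n$), so $\ZZ(P)$ is a proper algebraic subset of $\Real^{n+1}$ and therefore $\dim \ZZ(P) \le n = m+1$. The defining identity for admissibility,
$$V = \pi\bigl((\ZZ(P) \cap \ZZ(U_1 - e^{X_1})) \cup \ZZ(P, X_1, U_1 - 1)\bigr),$$
is immediate: the first projection equals $V$ by definition of $f$, and the second set is contained in the first, since at $X_1 = 0$ one has $e^{X_1} = 1 = U_1$.

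Because $\ZZ(P)$ is irreducible over $\mathbb K$ by hypothesis, the admissible set $W$ has a single irreducible component $W^{(1)} = \ZZ(P)$, so $\pi(W^{(1)} \cap \ZZ(U_1 - e^{X_1})) = V$. Applied to an arbitrary $(n-1)$-dimensional irreducible component $A$ of $V$, Lemma~\ref{le:oneexp} gives two alternatives: either $A = V$, in which case $V$ coincides with one of its irreducible components and is therefore itself irreducible; or $A$ is the union of an $(n-1)$-dimensional algebraic subset of $\ZZ(X_1)$ with a set of points of local dimension strictly less than $n-1$.

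To finish, observe that the hyperplane $\ZZ(X_1) \subset \Real^n$ is an irreducible algebraic set of dimension $n-1$, so every algebraic subset of $\ZZ(X_1)$ of dimension $n-1$ must coincide with the whole of $\ZZ(X_1)$. Hence in the second alternative, $A$ is exactly the union of the hyperplane $\ZZ(X_1)$ with a (possibly empty) set of points of local dimension less than $n-1$, which is the statement of the corollary. If $V$ is not irreducible, then no component can equal $V$, so only the second alternative can occur for any $(n-1)$-dimensional component, completing the proof.

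There is no real obstacle in this argument: the corollary is essentially a repackaging of Lemma~\ref{le:oneexp} once one recognizes that the irreducibility of $\ZZ(P)$ (combined with the automatic dimension bound $\dim \ZZ(P) \le n$) allows $\ZZ(P)$ itself to serve as an admissible set with a single irreducible component. The only conceptual step beyond bookkeeping is the elementary observation that a codimension-$0$ algebraic subset of an irreducible algebraic set equals the whole set, applied to the hyperplane $\ZZ(X_1)$.
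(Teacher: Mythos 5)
Your proof is correct and follows exactly the paper's route: the paper's own argument is simply to take $\ZZ(P)$ itself as an admissible set for $V$ and apply Lemma~\ref{le:oneexp}. Your additional checks (that $\dim \ZZ(P) \le m+1$ since $P \not\equiv 0$, and that an $(n-1)$-dimensional algebraic subset of the irreducible hyperplane $\ZZ(X_1)$ must be all of $\ZZ(X_1)$) are sound details the paper leaves implicit.
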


\begin{proof}
We can choose the set $\ZZ(P)$ as an admissible set for $V$, and then apply Lemma~\ref{le:oneexp}.
\end{proof}

\begin{theorem}\label{thm:oneexpgeneral}
Every $m$-dimensional irreducible component of $V$ either coincides with $\pi (W^{(i)} \cap \ZZ(U_1-e^{X_1}))$
for some $1\leq i\leq t$, or is an algebraic subset of $ \ZZ(X_1)$.
\end{theorem}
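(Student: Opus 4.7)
The plan is to invoke Lemma~\ref{le:oneexp} and then to upgrade its second alternative by combining Theorem~\ref{th:subset} with the irreducibility of the component under consideration.

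First I would apply Lemma~\ref{le:oneexp} to an arbitrary $m$-dimensional irreducible component $A$ of $V$. One branch gives immediately that $A=\pi(W^{(i)}\cap \ZZ(U_1-e^{X_1}))$ for some $i$, which is already the first half of the desired conclusion. It therefore suffices to rule out any ``stray points'' in the remaining branch, that is, to show the following: if $A=A_1\cup A_2$, where $A_1$ is an $m$-dimensional algebraic subset of $\ZZ(X_1)$ and $A_2$ is the set of points of $A$ at which the local dimension of $A$ is strictly less than $m$, then in fact $A_2\subseteq A_1$, so that $A=A_1$.

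Next I would apply Theorem~\ref{th:subset} to the exponential set $A$ itself. If $A_2=\emptyset$ there is nothing to prove. Otherwise, let $p$ be the largest integer with $0\le p<m$ for which $\{\x\in A:\dim_{\x} A=p\}\neq\emptyset$. Theorem~\ref{th:subset}, applied to $A$ with this value $p$, produces an exponential subset $W\subseteq A$ with $\dim W=p$ containing every point of $A$ whose local dimension in $A$ is at most $p$; by the choice of $p$ this subset contains the entire $A_2$.

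Finally I would invoke irreducibility to finish. Both $A_1$ (algebraic, hence exponential) and $W$ (exponential by construction) are Zariski-closed subsets of $A$, and the previous step gives $A=A_1\cup W$. Since $A$ is irreducible one of the two pieces must equal $A$; but $\dim W=p<m=\dim A$ rules out $W=A$, and therefore $A=A_1$, which is an algebraic subset of $\ZZ(X_1)$, as required. The one step that I expect to need care is ensuring that the full set $A_2$ of ``low-local-dimension'' points is genuinely contained in a single exponential subset of dimension strictly below $m$; this is exactly the content of Theorem~\ref{th:subset} once the right value of $p$ is fixed, and the rest of the argument is a straightforward combination of that theorem, Lemma~\ref{le:oneexp}, and the definition of irreducibility in the Zariski topology on $\Real^n$.
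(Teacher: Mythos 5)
Your proposal is correct and follows essentially the same route as the paper: Lemma~\ref{le:oneexp} supplies the dichotomy, Theorem~\ref{th:subset} traps the residual points of local dimension $<m$ inside an exponential subset of dimension $p<m$, and irreducibility of the component then forces it to equal the algebraic piece inside $\ZZ(X_1)$. The only deviation --- harmless, and arguably a little cleaner --- is that you apply Theorem~\ref{th:subset} to the component $A$ itself, using local dimensions intrinsic to $A$, whereas the paper applies it to the ambient set $V$ and works with $V_p$ before deriving the same contradiction with irreducibility.
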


\begin{proof}
It follows from  Lemma~\ref{le:oneexp} that if an irreducible component $A$ of $V$ does not coincide with one of
$\pi (W^{(i)} \cap \ZZ(U_1-e^{X_1}))$,  it is the union of a $m$-dimensional algebraic subset of $ \ZZ(X_1)$
and a set $B$ of points having local dimensions not exceeding $p<m$.
Let $C:= B \setminus \ZZ(X_1)$, then $C \subset V_p$.
To argue by contradiction suppose that $C \neq \emptyset$.
According to Theorem~\ref{th:subset}, there is an exponential subset $T \subset V$ such that $\dim (T)=p$ and $V_p \subset T$.
Hence, $A$ can be represented as the union of two distinct non-empty exponential sets, $A \cap \ZZ(X_1)$ and $A \cap T$.
It follows that $A$ is reducible, which is a contradiction.
\end{proof}

\begin{example}
Let $P:= X_1^2+(X_2^2+ (U_1-1)^2-1)^2$.
Note that $\ZZ(P) \subset \Real^3$ is a 1-dimensional set (a unit circle, centered at $(0,0,1)$) in the coordinate
plane $\ZZ(X_1)$, hence is irreducible.
Let $V=\ZZ(E(P))$.
We can choose the admissible family for $V$ consisting of the unique set $\ZZ(P)$.
Then $V$ is the projection of the 0-dimensional algebraic set $\ZZ(P, U_1-1, X_1)$,
is reducible, and consists of two irreducible components, $\ZZ(X_1^2 + (X_2 + 1)^2)$ and $\ZZ( X_1^2 + (X_2 - 1)^2)$.
\end{example}

The following corollary is immediate.

\begin{corollary}
If $\ZZ(P)$ is an irreducible algebraic set and $\dim (V)=n-1$, then $V$ is either irreducible or it has a unique
$(n-1)$-dimensional irreducible component coinciding with the hyperplane $\ZZ(X_1)$.
\end{corollary}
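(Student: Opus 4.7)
The plan is to apply Theorem~\ref{thm:oneexpgeneral} with the admissible set chosen to be $W := \ZZ(P)$ itself. First I would verify that this choice is admissible for $V$: since $\dim(V) = n-1$, the polynomial $P$ is nonzero and so $\dim(\ZZ(P)) \le n = m+1$; the identity $V = \pi(\ZZ(P) \cap \ZZ(U_1 - e^{X_1}))$ is immediate from $f = E(P)$; and the set $\ZZ(P, X_1, U_1-1)$ is automatically contained in $\ZZ(P) \cap \ZZ(U_1 - e^{X_1})$, so the extra projected piece in the definition of admissibility contributes nothing new.

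Since $\ZZ(P)$ is irreducible by assumption, the admissible set $W$ has a single irreducible component $W^{(1)} = W$. Theorem~\ref{thm:oneexpgeneral} then tells us that every $(n-1)$-dimensional irreducible component $A$ of $V$ is either $\pi(W \cap \ZZ(U_1 - e^{X_1}))$, which equals $V$, or an algebraic subset of $\ZZ(X_1)$. If the first alternative ever occurs we conclude that $V$ is irreducible and we are done; otherwise, since an irreducible component cannot coincide with the whole properly reducible $V$, every $(n-1)$-dimensional component of $V$ must be an algebraic subset of $\ZZ(X_1)$.

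To finish, I would use the fact that $\ZZ(X_1)$ is itself an irreducible algebraic hypersurface of dimension $n-1$ in $\Real^n$, so any algebraic subset of $\ZZ(X_1)$ of dimension $n-1$ necessarily equals $\ZZ(X_1)$. Hence every $(n-1)$-dimensional irreducible component of $V$ equals $\ZZ(X_1)$, and the uniqueness of the irreducible decomposition forces there to be exactly one such component. The main step is simply observing how both alternatives of Theorem~\ref{thm:oneexpgeneral} collapse when the only available $W^{(i)}$ is $\ZZ(P)$ itself; there is no deep obstacle, which is why this is presented as an immediate corollary rather than as a theorem of its own.
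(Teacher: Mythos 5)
Your proposal is correct and follows essentially the same route the paper intends: take $W=\ZZ(P)$ itself as the admissible set (exactly as in the proof of Corollary~\ref{re:codim1}) and apply Theorem~\ref{thm:oneexpgeneral}, observing that an $(n-1)$-dimensional algebraic subset of the irreducible hyperplane $\ZZ(X_1)$ must be all of $\ZZ(X_1)$. The paper simply labels this ``immediate,'' and your verification of admissibility and the uniqueness step fill in precisely the details it leaves to the reader.
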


For  the case of a reducible $V$ this corollary can be illustrated by Example~\ref{ex:intro}.

\begin{corollary}\label{cor:bezout}
The number of all irreducible components of $V$ does not exceed $(cd)^n$, where
$c$ is an absolute positive constant and $d$ is the total degree of polynomial $P$.
\end{corollary}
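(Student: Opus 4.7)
The plan is to combine Theorem~\ref{thm:oneexpgeneral} with classical Bezout-- or Milnor--Thom-type bounds for the number of irreducible components of real algebraic sets, and then to recurse on dimension by means of Theorem~\ref{th:subset}.

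First I would bound the top-dimensional, i.e.\ $m$-dimensional, irreducible components of $V$. By Theorem~\ref{thm:oneexpgeneral}, each such component is either (a)~a projection $\pi(W^{(i)} \cap \ZZ(U_1 - e^{X_1}))$ for some irreducible component $W^{(i)}$ of an admissible algebraic set $W \subset \Real^{n+1}$, or (b)~an algebraic subset of the hyperplane $\ZZ(X_1)$. For type~(a), the admissible set $W$ can be chosen inside $\ZZ(P) \subset \Real^{n+1}$ and cut out by $P$ together with finitely many polynomials derived from minors of the Jacobian of $P$, all of degree $O(d)$. The classical Milnor--Thom-type bound on the number of irreducible components of a real algebraic set in $\Real^{n+1}$ defined by polynomials of degree at most $O(d)$ then gives at most $(c_1 d)^{n+1}$ components. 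For type~(b), each component is an irreducible component of the algebraic set $V \cap \ZZ(X_1) = \ZZ(X_1,\, P(0, X_2, \ldots, X_n, 1))$, a hypersurface of degree at most $d$ in $\Real^{n-1}$, which has at most $(c_2 d)^{n-1}$ irreducible components.

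To handle components of smaller dimension $p < m$, I would recurse. For each $p$ with $V_p \neq \emptyset$, Theorem~\ref{th:subset} produces an exponential subset $V^{(p)} \subset V$ of dimension exactly $p$ containing all $V_\ell$ with $\ell \le p$; in particular, every $p$-dimensional irreducible component of $V$ is a top-dimensional irreducible component of $V^{(p)}$. Since $V^{(p)}$ may be taken from the filtration~(\ref{eq:singfilt}) of iterated singular loci of $V$, it is defined by $E$-polynomials obtained from $P$ by adjoining minors of the Jacobian, and the degrees stay of size $O(d)$ after the at most $n$ iterations required. Running the argument above on $V^{(p)}$ then yields a bound of the same shape at each dimension $p$.

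Summing over the at most $n$ possible values of $p$ and absorbing the factor of $n$ into the leading constant produces the target bound $(cd)^n$. The main obstacle I anticipate is the careful bookkeeping of polynomial degrees and of the number of defining equations under the iterated singular locus construction, so as to ensure that the degree remains $O(d)$ uniformly and that the constants arising in the classical Bezout-type bounds combine cleanly into a single absolute $c$ independent of $n$, $d$, and $P$.
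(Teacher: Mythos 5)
Your high-level architecture (apply Theorem~\ref{thm:oneexpgeneral}, count algebraic components by a Bezout-type bound, handle lower-dimensional components via Theorem~\ref{th:subset}) is in the same spirit as the paper, but the step that carries essentially all of the difficulty is asserted rather than proved: the claim that the admissible set $W$, and more generally the sets produced by Theorem~\ref{th:subset} and the filtration (\ref{eq:singfilt}), are cut out by $P$ together with Jacobian minors, ``all of degree $O(d)$'', with degrees staying $O(d)$ through up to $n$ iterations. First, the admissible set is built from Zariski closures of the local-dimension strata $(\ZZ(P))_\ell$; the locus of points of prescribed local dimension is a semialgebraic, not algebraic, condition, and neither it nor ${\rm Sing}$ in this paper is defined by $P$ and Jacobian minors of $P$ (the singular locus here is defined through the real ideal $I(\cdot)$ of the set, which need not be generated by $P$ --- think of $P$ a square, or of real algebraic sets whose ideal has generators of higher degree). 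Bounding the degree, and hence the number of irreducible components, of the Zariski closure of such a stratum is precisely the nontrivial external input the paper relies on: Theorem~1 of \cite{RV02} gives the bound $(c_1d)^n$ for the sum over all $\ell$ in a single application, with no iteration and no degree bookkeeping. Second, even under the most optimistic reading in which each singular-locus step only adjoins Jacobian minors, a $k\times k$ minor of degree-$d$ polynomials has degree up to $k(d-1)$, so $n$ iterations inflate degrees to order $n^{O(n)}d$ and the resulting count is $\left(c\,n^{O(n)}d\right)^n$, which is not of the form $(cd)^n$ with an absolute constant $c$.

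Third, your recursion is run on the exponential sets $V^{(p)}$ coming from the singular filtration of $V$ itself; these are defined by generators of ideals in ${\mathbb K}[\X,e^{X_1}]$ obtained purely from Noetherianity, and the paper gives no degree bound for such generators at all, so ``running the argument above on $V^{(p)}$'' has no quantitative content. The paper avoids any recursion on the exponential side: all counting is done for algebraic sets in $\Real^{n+1}$ --- the closures of the strata $(\ZZ(P))_\ell$ for all $\ell=0,\ldots,m$ (via \cite{RV02}) together with the algebraic set $\ZZ(P,U_1-1,X_1)$, whose components absorb the part of $V$ lying in $\ZZ(X_1)$ --- and Theorem~\ref{thm:oneexpgeneral} is then used to match every irreducible component of $V$ with one of these algebraic components. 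Your treatment of the components inside $\ZZ(X_1)$ via $\ZZ(X_1,P(0,X_2,\ldots,X_n,1))$ is fine and parallels the paper; the gap is the missing \cite{RV02}-type bound replacing the unjustified degree claims.
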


\begin{proof}
Theorem~1 in \cite{RV02} implies that the sum of numbers of all (absolutely) irreducible components of Zariski closures
of sets $(\ZZ(P))_\ell$ over all $\ell=0, \ldots, m$ does not exceed $(c_1d)^n$ for an absolute positive constant $c_1$.
Also the number of all irreducible components of the algebraic set
$\ZZ(P, U_1-1, X_1)$ does not exceed $(c_2d)^n$ for an absolute positive constant $c_2$.
Now the corollary follows from Theorem~\ref{thm:oneexpgeneral}.
\end{proof}

Observe that the bound in Corollary~\ref{cor:bezout} is asymptotically tight because it is tight already for polynomials.

\section{Case of many exponentials}

Consider  a polynomial $P \in {\mathbb K}[\X, \U]$.
Then every monomial of $P$, with respect to the variables $U_1, \ldots , U_n$, is of the kind
$A_\nu U_1^{d_{1 \nu}} \cdots U_n^{d_{n \nu}}$ with $A_\nu \in {\mathbb K}[\X]$, $d_{i \nu} \ge 0$.
We associate with $P$ the following union of linear subspaces:
$$W_P:= \bigcup_{\nu, \mu} \{ d_{1 \nu}X_1 + \cdots + d_{n \nu}X_v= d_{1 \mu}X_1 + \cdots + d_{n \mu}X_n \},$$
where the union is taken over all pairs of different monomials.
(If there is at most one monomial with respect to $\U$, then $W_P$ is undefined.)

The following lemma is a version of the Lindemann-Weierstrass theorem.

\begin{lemma}\label{le:rational}
If for some point $\x=(x_1, \ldots ,x_n) \in \Real_{alg}^n$ the polynomial $Q:= P(\x, \U) \in \Real_{alg}[\U]$
is not identically zero and $Q(e^{x_1}, \ldots , e^{x_n})=0$, then $\x \in W_P$.
\end{lemma}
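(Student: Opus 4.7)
The plan is to reduce the claim to a single application of the Lindemann--Weierstrass theorem in its linear-independence form: if $\beta_1,\ldots,\beta_N$ are pairwise distinct algebraic numbers, then $e^{\beta_1},\ldots,e^{\beta_N}$ are linearly independent over $\Real_{alg}$.

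First I would group $P$ by its $\U$-monomials, writing
$$
P(\X,\U)=\sum_\nu A_\nu(\X)\,U_1^{d_{1\nu}}\cdots U_n^{d_{n\nu}},\qquad A_\nu\in\mathbb{K}[\X].
$$
The hypothesis $Q=P(\x,\U)\not\equiv 0$ means that at least one specialised coefficient $A_\nu(\x)$ is nonzero. Substituting $U_i=e^{x_i}$ turns the vanishing assumption $Q(e^{x_1},\ldots,e^{x_n})=0$ into the algebraic relation
$$
\sum_\nu A_\nu(\x)\,e^{\beta_\nu}=0,\qquad \beta_\nu:=d_{1\nu}x_1+\cdots+d_{n\nu}x_n,
$$
where each $A_\nu(\x)$ and each $\beta_\nu$ lies in $\Real_{alg}$ because $\mathbb{K}\subset\Real_{alg}$ and $\x\in\Real_{alg}^n$.

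Next I would rule out the trivial cases. Since $e^{\beta_\nu}\neq 0$, a single nonzero term cannot sum to zero, so at least two of the coefficients $A_\nu(\x)$ must be nonzero. Restricting the sum to those indices, I would then apply Lindemann--Weierstrass: were all the surviving $\beta_\nu$ pairwise distinct, the exponentials $e^{\beta_\nu}$ would be linearly independent over $\Real_{alg}$, contradicting the non-trivial algebraic relation just obtained. Hence there must exist two distinct indices $\nu,\mu$ with $\beta_\nu=\beta_\mu$, which is precisely the hyperplane equation $d_{1\nu}x_1+\cdots+d_{n\nu}x_n=d_{1\mu}x_1+\cdots+d_{n\mu}x_n$ defining one of the linear pieces of $W_P$; thus $\x\in W_P$.

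There is no serious obstacle beyond invoking the correct form of the Lindemann--Weierstrass theorem. The only point worth a brief remark is that the pair $(\nu,\mu)$ produced in this way corresponds to two \emph{genuinely distinct $\U$-monomials of $P$}, as required by the definition of $W_P$; this is automatic, because distinct $\U$-monomials of the specialisation $Q$ necessarily come from distinct $\U$-monomials of $P$.
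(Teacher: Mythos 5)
Your argument is correct and is essentially the paper's own proof: expand $P$ in its $\U$-monomials, specialise at $\x$, discard zero coefficients, note at least two terms survive, and invoke the Lindemann--Weierstrass theorem (in Baker's formulation on linear independence of $e^{\beta_\nu}$ over the algebraic numbers) to force two exponents $\beta_\nu=\beta_\mu$ to coincide, which is exactly the hyperplane condition defining $W_P$. Your closing remark that the coinciding exponents come from genuinely distinct $\U$-monomials of $P$ is a fine (and correct) clarification of a point the paper leaves implicit.
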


\begin{proof}
This is a slight adjustment of a standard proof of the Lindemann-Weierstrass theorem.

We have:
$$Q(e^{x_1}, \ldots ,e^{x_n})= \sum_\nu A_\nu(\x) e^{d_{1 \nu} x_1+ \cdots +d_{n \nu}x_n}=0,$$
where the coefficients $A_\nu(\x)$ are not all zero.
Removing all terms with zero coefficients, assume that in this sum all coefficients are non-zero.
Obviously, at least two terms will remain, one of which may be a non-zero constant.
By Baker's reformulation of the Lindemann-Weierstrass theorem \cite[Theorem~1.4]{Baker}, the powers
$d_{1 \nu} x_1+ \cdots +d_{n \nu}x_n$ are not pair-wise distinct.
It follows that $\x \in W_P$.
\end{proof}

Denote $f:= E(P)$, $V:=\ZZ(f) \subset \Real^n$, $V':=V \setminus W_P$.

\begin{lemma}\label{le:algebr}
$V$ is an {\em algebraic set} if and only if $V' \times \Real^n \subset \ZZ(P)$.
\end{lemma}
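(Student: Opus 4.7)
The plan is to prove both directions of the equivalence, each via Lemma~\ref{le:rational} combined with density of algebraic points, but with quite different flavors. Write $Z := \bigcap_\nu \ZZ(A_\nu)$; note $V'\times \Real^n\subset \ZZ(P)$ is simply the condition $V'\subset Z$, and that $Z\subset V$ always, so under either hypothesis one has $V=Z\cup\bigcup_{\nu,\mu}(V\cap H_{\nu\mu})$.

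For the forward direction, assume $V$ is algebraic. I would argue by contradiction: suppose $S:=V\setminus(Z\cup W_P)$ is non-empty. Since $V$, $Z$ and $W_P$ are all algebraic sets defined over $\mathbb K\subset\Real_{alg}$, $S$ is semi-algebraic over $\Real_{alg}$. By the Tarski--Seidenberg transfer principle (or a cell decomposition), $S$ contains a point $\x_0\in\Real_{alg}^n$. At such $\x_0$ one has $P(\x_0,e^{\x_0})=f(\x_0)=0$ (since $\x_0\in V$) while $P(\x_0,\U)\not\equiv 0$ as a polynomial in $\U$ (since $\x_0\notin Z$). Lemma~\ref{le:rational} then forces $\x_0\in W_P$, contradicting $\x_0\in S$. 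Hence $S=\emptyset$, i.e.\ $V'\subset Z$.

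For the reverse direction, assume $V'\subset Z$. Since $Z$ is algebraic, it suffices to prove that each $V\cap H_{\nu\mu}$ is algebraic. The natural strategy is induction on the number $k$ of $\U$-monomials of $P$. The base case $k=1$ is immediate because then $f=A(\X)e^{\langle d,\X\rangle}$ and $V=\ZZ(A)$. For the inductive step, fix a pair and set $H:=H_{\nu\mu}$. On $H$ the equality $e^{\langle d_\nu,\X\rangle}=e^{\langle d_\mu,\X\rangle}$ allows one to combine the $\nu$- and $\mu$-terms, so $f|_H=\hat f_{\nu\mu}=E(\hat P_{\nu\mu})$ with $\hat P_{\nu\mu}$ having $k-1$ $\U$-monomials. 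The inductive hypothesis applied on $H\cong\Real^{n-1}$ says $V\cap H$ is algebraic iff the corresponding inclusion $(V\cap H)\setminus W_{\hat P_{\nu\mu}}\subset \hat Z_{\nu\mu}$ holds, so this inclusion is what needs to be verified.

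Given $\x\in V\cap H$ with $\x\notin W_{\hat P_{\nu\mu}}$ (so $\x$ meets exactly the hyperplane $H$ among those of $W_P$), I would split into two cases. If $\x$ lies in the Euclidean closure of $V'$, then because $V'\subset Z$ and $Z$ is closed, $\x\in Z\subset\hat Z_{\nu\mu}$. Otherwise some small ball $B$ around $\x$ has $B\cap V'=\emptyset$ and $B\cap W_P=B\cap H$, which forces $V\cap B\subset H$; the local analysis of the analytic function $f$ together with the identity $f|_H=\hat f_{\nu\mu}$ then forces $\hat f_{\nu\mu}\equiv 0$ on an open subset of $H$, hence by analyticity on all of $H$, and finally Lemma~\ref{le:rational} applied to $\hat P_{\nu\mu}$ at the (Zariski dense) algebraic points of $H\setminus W_{\hat P_{\nu\mu}}$, together with the Zariski-closedness of $\hat Z_{\nu\mu}$, yields $\hat Z_{\nu\mu}\supset H$, giving $\x\in\hat Z_{\nu\mu}$. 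I expect the main obstacle to be the analytic/dimension argument of this second case, in particular handling points $\x$ which are singular points of $V$ (where $\dim_{\x}V$ could drop below $n-1$); this is where one has to be careful that ``$V\cap B\subset H$'' really does force $\hat f_{\nu\mu}$ to vanish identically on $H$ rather than only on a thin subset.
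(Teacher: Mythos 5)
Your ``only if'' half is essentially the paper's own proof: both arguments come down to producing a point of $V\setminus(Z\cup W_P)$ with coordinates in $\Real_{alg}$ and then invoking Lemma~\ref{le:rational}; the paper gets such a point from density of algebraic points in a Euclidean-open subset of the algebraic set $V$, you from transfer for semialgebraic sets over $\Real_{alg}$. Both versions rest on the same implicit convention that ``algebraic set'' means defined over ${\mathbb K}$ (or at least over $\Real_{alg}$) --- otherwise neither density nor transfer is available --- and with that reading this half is correct.

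The ``if'' half is where the genuine gap lies, and it is not one you can close. The decisive step --- that $V\cap B\subset H$ forces $\hat f_{\nu\mu}\equiv 0$ on an open subset of $H$ --- is false: $V\cap B$ may be a single point, or any thin subset of $H$, without $f$ restricted to $H$ vanishing identically near it, and nothing in the hypothesis $V'\subset Z$ rules this out. In fact the converse implication fails as stated. Take $n=2$ and $P=(U_1-U_2)^2+(U_1-1-X_1-X_2)^2$. Then $V=\ZZ(E(P))=\{(0,0),(t_0,t_0)\}$, where $e^{t_0}=1+2t_0$ and $t_0\neq 0$, so $t_0$ is transcendental by Lindemann. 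The monomial pair $U_1^2,\,U_1U_2$ puts the diagonal $\{X_1=X_2\}$ into $W_P$, hence $V\subset W_P$, $V'=\emptyset$, and $V'\times\Real^2\subset\ZZ(P)$ holds vacuously; yet $V$ is not an algebraic set over ${\mathbb K}$, since any polynomial over ${\mathbb K}$ vanishing at $(t_0,t_0)$ vanishes on the whole diagonal. The variant $P=(U_1-U_2)^2+(U_1-1-X_1-X_3)^2$ in $\Real^3$ gives $V=\{X_1=X_2,\ X_3=e^{X_1}-1-X_1\}$, which is not algebraic even over $\Real$, so the failure is not an artifact of the ground field. The paper dismisses this direction as ``trivial'' and never uses it (only the ``only if'' half enters Corollary~\ref{cor:algebr1}), so the sound course is to prove the forward implication as you did and either drop the converse or add a hypothesis (e.g.\ that $V\cap W_P$ is algebraic) under which it genuinely is trivial. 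A smaller, in principle patchable, defect of your induction --- moot given the counterexample --- is that restricting $f$ to a rational hyperplane $H$ does not literally yield an $E$-polynomial in $n-1$ variables: the exponents become arbitrary integer linear forms in coordinates on $H$, so one must first clear a monomial factor in the exponentials before the inductive hypothesis even applies.
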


\begin{proof}
The ``if'' part of the implication is trivial.

Suppose now that $V$ is algebraic and $V' \neq \emptyset$.
Observe that the set $\{ \x \in V'|\> \{ \x \} \times \Real^n \subset \ZZ(P) \}$ is closed in $V'$
(with respect to the Euclidean topology).
Hence, the complement $V''$ of this set in $V'$ is open in $V'$.
Suppose that contrary to the claim, $V' \times \Real^n \not\subset \ZZ(P)$, i.e., $V'' \neq \emptyset$.
The algebraic points in $V$ are everywhere dense in $V$ since, by the assumption, $V$ is an algebraic set.
Therefore, there is a point $\x=(x_1, \ldots ,x_n) \in V'' \cap \Real_{alg}^n$ such that the polynomial
$Q:=P(\x, \U) \in \Real_{alg}[\U]$ is not identically zero.
Since $(e^{x_1}, \ldots , e^{x_n}) \in \ZZ(Q)$, we conclude, by Lemma~\ref{le:rational}, that $\x \in W_P$.
This contradicts the choice of $\x$.
\end{proof}

\begin{corollary}\label{cor:algebr1}
Let $V$ be an irreducible algebraic set with $\dim (V)=n-1$.
Then either $V$ is a hyperplane in $W_P$, or $V \times \Real^n$ is an irreducible component of $\ZZ(P)$.
\end{corollary}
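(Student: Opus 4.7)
The plan is to dichotomize on whether $V\subset W_P$ or not, and in each case extract the conclusion from Lemma~\ref{le:algebr} together with standard irreducibility/dimension arguments.

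First, I would observe that $V$, being irreducible as an \emph{exponential} set, is also irreducible as an algebraic set. Indeed, any decomposition $V=V_1\cup V_2$ into proper algebraic subsets is also a decomposition into proper exponential subsets, which contradicts the hypothesis. Thus $V$ is an irreducible algebraic set of dimension $n-1$, and consequently $V\times\Real^n$ is an irreducible algebraic set of dimension $2n-1$.

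Next, suppose $V\subset W_P$. Since $W_P$ is a finite union of hyperplanes and $V$ is an irreducible algebraic set, $V$ must be contained in one of these hyperplanes. Both $V$ and this hyperplane are irreducible algebraic sets of dimension $n-1$, so they coincide, giving the first alternative of the conclusion.

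Now suppose $V\not\subset W_P$. Then $V'=V\setminus W_P$ is non-empty; moreover, since $V$ is irreducible and $W_P\cap V$ is a proper algebraic subset of $V$, $V'$ is Zariski-dense (and Euclidean-dense) in $V$. By Lemma~\ref{le:algebr}, $V'\times\Real^n\subset\ZZ(P)$, and taking the (Euclidean or Zariski) closure of both sides yields $V\times\Real^n\subset\ZZ(P)$. Since $V\times\Real^n$ is irreducible of dimension $2n-1$ and $\dim\ZZ(P)\le 2n-1$ (assuming $P\not\equiv 0$, which is tacit throughout the section), any irreducible component of $\ZZ(P)$ containing $V\times\Real^n$ has dimension exactly $2n-1$ and hence equals $V\times\Real^n$. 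This gives the second alternative.

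There is no real obstacle here: the main conceptual step is just noticing that algebraicity of $V$ lets us upgrade exponential-irreducibility to algebraic-irreducibility, after which Lemma~\ref{le:algebr} applied on the dense open subset $V'$ and standard closure/dimension bookkeeping finish the argument.
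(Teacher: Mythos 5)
Your proof is correct and is essentially the paper's own argument: the same dichotomy on $W_P$ (under irreducibility and $\dim V=n-1$, your case $V\subset W_P$ is equivalent to the paper's case that $V$ contains a hyperplane of $W_P$), followed by Lemma~\ref{le:algebr} and the irreducibility/dimension count identifying $V\times\Real^n$ as a component of $\ZZ(P)$; the only (cosmetic) difference is that the paper passes from $V'\times\Real^n\subset\ZZ(P)$ to $V\times\Real^n\subset\ZZ(P)$ via $V'_{n-1}\neq\emptyset$ and a dimension argument on the irreducible set $V\times\Real^n$, while you use Zariski density of $V'$ in $V$. One small caveat: your parenthetical claim that $V'$ is Euclidean-dense in $V$ is false in general for real algebraic sets (an irreducible real algebraic set can have pieces of lower local dimension outside the closure of the top-dimensional part, as in umbrella-type examples), but this is harmless since the Zariski-closure route you also give is the one that is valid and suffices.
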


\begin{proof}
If $V$ contains a hyperplane in $W_P$, then it coincides with this hyperplane, since $V$ is an algebraic set.
If $V$ is not a hyperplane in $W_P$, then $\dim (V \cap W_P)< n-1$.
Hence $V'_{n-1} \neq \emptyset$.
Since $V \times \Real^n$ is an irreducible algebraic set while, by Lemma~\ref{le:algebr},
$\dim (V \times \Real^n \cap \ZZ(P))=2n-1$, we conclude that $V \times \Real^n$ is an irreducible component of $\ZZ(P)$.
\end{proof}
Consider a polynomial $S\in {\mathbb K}[\X, \U]$.
Let $g:=E(S)$, $T:= \ZZ (g)$.
Suppose that $\dim (T)=n-1$, and that $T_{n-1} \subset B$, where $B$ is an $(n-1)$-dimensional
algebraic set defined over ${\mathbb K}$.
Represent $S$ as a polynomial in $\U$ with coefficients in ${\mathbb K} [\X]$.
Then every monomial is of the kind $A_\nu U_{1}^{d_{1 \nu}} \cdots U_{n}^{d_{n \nu}}$,
with $A_\nu \in {\mathbb K} [\X]$, $ d_{i \nu} \ge 0$.
Consider the algebraic set
$$W_S:= \bigcup_{\nu, \mu} \{ d_{1 \nu}X_{1}+ \cdots + d_{n \nu}X_{n} =
d_{1 \mu}X_{1}+ \cdots + d_{n \mu}X_{n} \},$$
where the union is taken over all pairs of different monomials.

\begin{lemma}\label{le:algebr1}
With the notations described above, the following inclusions take place:
\begin{enumerate}[(i)]
\item
$(T \setminus W_S)_{n-1} \times \Real^n \subset \ZZ(S)$;
\item
$(T \setminus (W_P \cup W_S))_{n-1} \times \Real^n \subset \ZZ(P,S)$;
\item
$(T \cap W_S) \setminus W_P)_{n-1} \times \Real^n \subset \ZZ(P)$.
\end{enumerate}
\end{lemma}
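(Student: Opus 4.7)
All three inclusions will follow from a single strategy: approximate $\x_0$ by $\mathbb K$-algebraic points inside a local $(n-1)$-dimensional piece of $T$, apply Lemma~\ref{le:rational} to upgrade the vanishing of the exponential polynomial at those approximating points to the identical vanishing of the underlying polynomial in $\U$, and then spread this identical vanishing to $\x_0$ itself by a Zariski-closure argument.

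First I would fix $\x_0$ in the set on the left-hand side and choose a Euclidean neighbourhood $N$ of $\x_0$ disjoint from the relevant closed algebraic sets: $W_S$ for (i), $W_P\cup W_S$ for (ii), and $W_P$ for (iii). Because $\x_0\in T_{n-1}\subset B$ and $\dim B=n-1$, an $(n-1)$-dimensional real analytic piece of $T$ passing through $\x_0$ (produced by applying Lemma~\ref{le:3.2.9} to the non-singular stratification of $T$ in which $\x_0$ lies in the top stratum) is contained in $B$ and must therefore be an open subset of the smooth part of an $(n-1)$-dimensional component of $B$. Since $\mathbb K$-algebraic points are Euclidean-dense in every real algebraic set defined over $\mathbb K$, I can extract a sequence $\{\x_k\}\subset T_{n-1}\cap N$ with $\x_k\in\Real_{alg}^n$ converging to $\x_0$.

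For part (i), each $\x_k$ satisfies $g(\x_k)=S(\x_k,e^{\x_k})=0$ and $\x_k\notin W_S$, so Lemma~\ref{le:rational} applied to $S$ in place of $P$ forces $S(\x_k,\U)\equiv 0$, i.e., every coefficient of $S$ viewed as an element of ${\mathbb K}[\X][\U]$ vanishes at $\x_k$. This is a Zariski-closed and hence Euclidean-closed condition on $\x$, so it must also hold at the limit point $\x_0$, giving $\{\x_0\}\times\Real^n\subset\ZZ(S)$. Parts (ii) and (iii) follow by the same scheme applied to $P$: provided $T\subset V$, which is the context in which this lemma is invoked later, one has $P(\x_k,e^{\x_k})=0$ and $\x_k\notin W_P$, so Lemma~\ref{le:rational} gives $P(\x_k,\U)\equiv 0$ and the Zariski-closure step propagates this to $\x_0$. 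Combining with (i) on points of (ii) yields $\{\x_0\}\times\Real^n\subset\ZZ(P,S)$, while in (iii) the assumption $\x_0\notin W_P$ alone suffices for the conclusion on $\ZZ(P)$.

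The main technical obstacle will be arranging that the approximating sequence $\{\x_k\}$ actually lies in $T_{n-1}$ (not just in $B$), i.e., that a Euclidean neighbourhood of $\x_0$ inside $T$ coincides with an open subset of the smooth locus of $B$ so that $\mathbb K$-algebraic density transfers from $B$ to $T_{n-1}$. This rests on the equality of local dimensions forced by $T_{n-1}\subset B$ together with $\dim T=\dim B=n-1$, via Lemma~\ref{le:3.2.9}; the remaining steps are routine applications of Lemma~\ref{le:rational} and the fact that algebraic subsets of $\Real^n$ are closed in the Euclidean topology.
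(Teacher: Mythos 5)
Your proof is correct and is essentially the paper's own argument: density of real algebraic points in $T_{n-1}$ (guaranteed by $T_{n-1}\subset B$), an application of Lemma~\ref{le:rational} at such points, and a closedness argument --- the paper merely runs it as a contradiction with the (relatively open) set of points $\x$ where $S(\x,\U)\not\equiv 0$, whereas you pass to the limit along an approximating sequence, which is the same mechanism. Your explicit proviso $T\subset V$ for parts (ii) and (iii) is indeed needed and matches the context in which the paper later invokes the lemma ($T$ an irreducible component of $V$), a hypothesis the paper leaves implicit.
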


\begin{proof}
We will only prove item (i), since the proofs of the other items are essentially the same.

Denote $A:=(T \setminus W_S)_{n-1}$.
Suppose that $A \neq \emptyset$.
Observe that the set $\{ \x \in A|\> \{ \x \} \times \Real^n \subset \ZZ(S) \}$ is closed in $A$.
Thus, the complement $C$ of this set in $A$ is open in $A$.
Suppose that contrary to the claim, $A \times \Real^n \not\subset \ZZ(S)$, i.e., $C \neq \emptyset$.
The algebraic points in $T_{n-1}$ are everywhere dense in $T_{n-1}$ since $T_{n-1} \subset B$.
Therefore, there is a point $\x=(x_1, \ldots ,x_n) \in C \cap \Real_{alg}^n$ such that polynomials
$$Q:=S(\x, U_1, \ldots , U_n) \in \Real_{alg}[\U]$$
are not identically zero.
Since $(e^{x_1}, \ldots , e^{x_n}) \in \ZZ(Q)$, we conclude, by Lemma~\ref{le:rational}, that $\x \in W_S$.
This contradicts the choice of $\x$.
\end{proof}

Now we assume that $V=\ZZ(f) \subset \Real^n$ and $T=\ZZ(g) \subset \Real^n$ are exponential sets,
not necessarily algebraic.
We will associate with these sets polynomials $P,\ S$ and sets $W_P,\ W_S$ as above.

\begin{lemma}\label{le:structure}
Let $\dim (V)= n-1$ and $\ZZ(P)$ be irreducible.
Let $T \subset V$ be a $(n-1)$-dimensional irreducible component of $V$, with irreducible $\ZZ(S)$,
such that there is an $(n-1)$-dimensional algebraic set $B$ (defined over $\mathbb K$) containing $T_{n-1}$.
Then either $V= T$ or $T_{n-1} \subset W_P$.
\end{lemma}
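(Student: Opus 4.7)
Plan: I will prove the contrapositive, assuming $T_{n-1}\not\subset W_P$, and conclude $V=T$. The overall strategy is first to apply Lemma~\ref{le:algebr1}(ii)--(iii) to show that $V$ is algebraic (and irreducible), then to force $T=V$ by a symmetric use of Lemma~\ref{le:algebr1}(i) together with the irreducibility of $\ZZ(S)$; a degenerate case, in which the symmetric argument is not available, is handled by the rigid geometry of a hyperplane.

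Step 1 (making $V$ algebraic). I split on whether $T_{n-1}\subset W_S$. If $T_{n-1}\not\subset W_S$, then for each $\x\in T_{n-1}\setminus(W_P\cup W_S)$ the algebraic closed set $W_P\cup W_S$ does not contain $\x$, so $\dim_{\x}(T\setminus(W_P\cup W_S))=n-1$ and Lemma~\ref{le:algebr1}(ii) yields $\{\x\}\times\Real^n\subset\ZZ(P,S)\subset\ZZ(P)$. If $T_{n-1}\subset W_S$, then $T\cap W_S\supset T_{n-1}$ and for each $\x\in T_{n-1}\setminus W_P$ one has $\dim_{\x}((T\cap W_S)\setminus W_P)=n-1$, so Lemma~\ref{le:algebr1}(iii) gives $\{\x\}\times\Real^n\subset\ZZ(P)$. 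Taking algebraic Zariski closure produces an algebraic set $D\subset B$ of dimension $n-1$ with $D\times\Real^n\subset\ZZ(P)$. A fibre-dimension count for the projection along $\U-e^{\X}$ gives $\dim\ZZ(P)=2n-1$ from $\dim V=n-1$, so picking an $(n-1)$-dimensional irreducible algebraic component $B_0$ of $D$, the set $B_0\times\Real^n$ is an irreducible $(2n-1)$-dimensional subset of the irreducible $\ZZ(P)$, forcing $\ZZ(P)=B_0\times\Real^n$. Hence $V=\{\x:(\x,e^{\x})\in\ZZ(P)\}=B_0$ is an irreducible algebraic set of dimension $n-1$.

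Step 2 (forcing $T=V$). \emph{Case A}: $T_{n-1}\not\subset W_S$. Apply Lemma~\ref{le:algebr1}(i) symmetrically to $T$ and $S$: the algebraic Zariski closure of $T_{n-1}\setminus W_S$ is an algebraic set $D'\subset B$ of dimension $n-1$ with $D'\times\Real^n\subset\ZZ(S)$, and since $\ZZ(S)$ is irreducible and $\dim\ZZ(S)=2n-1$ (from $\dim T=n-1$), an $(n-1)$-dimensional irreducible component $B_0'$ of $D'$ satisfies $\ZZ(S)=B_0'\times\Real^n$, whence $T=B_0'$ is irreducible algebraic of dimension $n-1$. Since $T\subset V=B_0$ with both algebraically irreducible of the same dimension, $B_0'=B_0$, so $T=V$. \emph{Case B}: $T_{n-1}\subset W_S$. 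Then the set $D$ from Step 1 lies in the hyperplane arrangement $W_S$, and since an irreducible algebraic subset of a finite union of hyperplanes is contained in one of them, $B_0=V$ is a single rational hyperplane $H$ through the origin. Now $T\subset H$ with $\dim T=n-1=\dim H$, and $T=\ZZ(g)$ for the real analytic $g=E(S)$; restricting to $H\cong\Real^{n-1}$, a nonzero real analytic function on an $(n-1)$-dimensional manifold has zero locus of dimension at most $n-2$, so $g|_H\equiv 0$, giving $H\subset\ZZ(g)=T$ and hence $T=H=V$.

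Main obstacle: the delicate step is Case A of Step 2, which requires re-running Step 1 with the roles of $(P,V)$ and $(S,T)$ exchanged. This works only because the hypotheses of Lemma~\ref{le:structure} supply both irreducibility of $\ZZ(P)$ and of $\ZZ(S)$, together with a common algebraic envelope $B$ for $T_{n-1}$. The case split is precisely designed so that the failure of the symmetric argument in Case B (where $T_{n-1}$ is too degenerate to produce an $(n-1)$-dimensional slice off $W_S$) is compensated by the rigidity of the single hyperplane $H=V$, in which any proper exponential subset has dimension strictly less than $n-1$.
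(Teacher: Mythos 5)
Your overall toolkit is the same as the paper's (Lemma~\ref{le:algebr1} plus irreducibility of $\ZZ(P)$, $\ZZ(S)$ and an analytic-continuation step), but your case split is not exhaustive, and this is a genuine gap. You dichotomize on ``$T_{n-1}\subset W_S$'' versus ``$T_{n-1}\not\subset W_S$'', and in the second branch you tacitly assume that $T_{n-1}\setminus(W_P\cup W_S)\neq\emptyset$. That does not follow from $T_{n-1}\not\subset W_P$ and $T_{n-1}\not\subset W_S$: it is entirely consistent with these two non-inclusions that $T_{n-1}\subset W_P\cup W_S$, e.g.\ when the $(n-1)$-dimensional locus of $T$ splits into pieces lying in hyperplanes of $W_P$ only and pieces lying in hyperplanes of $W_S$ only. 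Nothing available at this stage of the paper rules this out (excluding it would itself require an argument via Theorem~\ref{th:subset} and the irreducibility of $T$, which you do not make). In that situation your Step 1, branch ``$T_{n-1}\not\subset W_S$'', produces an empty set whose Zariski closure $D$ is empty rather than $(n-1)$-dimensional, so $V$ is never shown to be algebraic, and Step 2, Case A, which relies on $V=B_0$, collapses with it. The paper's proof is organized precisely to catch this configuration: its second case is ``$(T\setminus(W_P\cup W_S))_{n-1}=\emptyset$ and $\dim((T\cap W_S)\setminus W_P)=n-1$'', which it handles by Lemma~\ref{le:algebr1}(iii) together with analytic continuation, forcing $\ZZ(P)=L\times\Real^n$ for a hyperplane $L$ and hence $T=V=L$. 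Your argument can be repaired along the same lines (apply Lemma~\ref{le:algebr1}(iii) to the $(n-1)$-dimensional part of $T_{n-1}$ lying in $W_S\setminus W_P$, which is nonempty in the missing configuration), but as written the proof does not cover it.

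A secondary flaw: the parenthetical justifications ``$\dim\ZZ(P)=2n-1$ from $\dim V=n-1$ by a fibre-dimension count'' and ``$\dim\ZZ(S)=2n-1$ from $\dim T=n-1$'' are false in general; for instance $P=X_1^2+(U_1-1)^2$ has $\dim\ZZ(P)=2n-2$ while $\ZZ(E(P))=\ZZ(X_1)$ has dimension $n-1$. Where you use these claims they are harmless, because once you have an $(n-1)$-dimensional $D$ (resp.\ $D'$) with $D\times\Real^n\subset\ZZ(P)$ (resp.\ $D'\times\Real^n\subset\ZZ(S)$) the equality $\dim\ZZ(P)=2n-1$ (resp.\ $\dim\ZZ(S)=2n-1$) follows from that containment together with $P\not\equiv 0$ (resp.\ $S\not\equiv 0$); but the justification should be corrected, and note that the existence of such a $D$ is exactly what fails in the missing case above. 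Your Case B (identity theorem for the real analytic $g$ restricted to the hyperplane $H=V$) is correct and is essentially the paper's analytic-continuation step in a slightly different guise.
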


\begin{proof}
Let $A:= (T \setminus (W_P \cup W_S))_{n-1}$.
Suppose first that $A \neq \emptyset$.
By Lemma~\ref{le:algebr1} (ii), $A \times \Real^n \subset \ZZ(S)$ and $A \times \Real^n \subset \ZZ(P)$.
Because $\dim (A \times \Real^n)=2n-1$, and the sets $\ZZ(S),\> \ZZ(P)$ are irreducible algebraic, these sets coincide.
It follows that $T= V$.

Now suppose that $A = \emptyset$ and $\dim ((T \cap W_S) \setminus W_P)=n-1$.

By analytic continuation, it means that $(T \cap W_S)_{n-1}$ consists of some hyperplanes
in $W_S$ which are not all in $W_P$, hence $(T \cap W_S) \setminus W_P$ contains a hyperplane,
say $L$, in $\Real^n$.
By Lemma~\ref{le:algebr1} (iii), $(T \cap W_S) \setminus W_P)_{n-1} \times \Real^n \subset \ZZ(P)$.
Therefore, $\ZZ(P)$ contains a hyperplane $L \times \Real^n$, hence $\ZZ(P)$ (being irreducible algebraic set)
coincides with $L \times \Real^n$.
It follows that both $T$ and $V$ coincide with the same hyperplane, $L$, in $\Real^n$,
thus again, $T= V$.

If neither of the above alternatives take place, we have $T_{n-1} \subset W_P$.
\end{proof}

\begin{remark}
Lemma~\ref{le:structure} can be viewed as an analogy, in codimension 1, of the classical Ax-Lindemann theorem
(see its modern treatment in \cite[Section~6]{Pila}) which deals with exponential
sums over complex numbers.
\end{remark}

\begin{corollary}\label{cor:algebr}
Let $V \subset \Real^n$ be an $(n-1)$-dimensional algebraic set over $\mathbb K$, which is irreducible as an algebraic set.
Then it's irreducible.
\end{corollary}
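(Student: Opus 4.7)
The plan is to apply Lemma~\ref{le:structure} with a choice of $P$ for which the associated set $W_P$ is empty, so that the second alternative of the lemma cannot occur. Since $V$ is an $(n-1)$-dimensional algebraic subset of $\Real^n$, I would write $V = \ZZ(\tilde P)$ for some $\tilde P \in {\mathbb K}[\X]$ and view $\tilde P$ as an element of ${\mathbb K}[\X,\U]$ that does not depend on $\U$. Then $\tilde P$ has only a single monomial with respect to $\U$, so the set $W_{\tilde P}$ is undefined, and is empty for our purposes. Moreover, $\ZZ(\tilde P) \subset \Real^{2n}$ coincides with $V \times \Real^n$, which is irreducible over ${\mathbb K}$ because $V$ is, while $\ZZ(E(\tilde P)) = V$ in $\Real^n$. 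Thus the hypotheses of Lemma~\ref{le:structure} on $P$ are satisfied by $\tilde P$.

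Next I would argue by contradiction: assume $V$ is reducible as an exponential set, and write its irreducible decomposition $V = T_1 \cup \cdots \cup T_k$ with $k \geq 2$. Because $\dim V = n-1$ and the components are pairwise not contained in one another, at least one of them, call it $T$, has dimension $n-1$ and satisfies $T \subsetneq V$. The algebraic set $B := V$ is itself $(n-1)$-dimensional and contains $T_{n-1}$, so, once $T$ has been presented as $\ZZ(E(S))$ with $\ZZ(S)$ irreducible, Lemma~\ref{le:structure} forces either $V = T$ or $T_{n-1} \subset W_{\tilde P}$. The second alternative is impossible because $W_{\tilde P}$ is empty while $T_{n-1} \neq \emptyset$, and the first alternative contradicts $T \subsetneq V$. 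Hence no proper $(n-1)$-dimensional irreducible exponential component of $V$ can exist, so $V$ itself is the unique component and is irreducible.

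The main obstacle is verifying that the component $T$ admits a presentation $T = \ZZ(E(S))$ with $\ZZ(S)$ irreducible as a real algebraic set over ${\mathbb K}$, as Lemma~\ref{le:structure} requires. I would start from any representation $T = \ZZ(E(S_0))$, factor $S_0$ over ${\mathbb K}$ into irreducible polynomials, and use the irreducibility of $T$ as an exponential set (together with the fact that $E$ is a ring homomorphism by Lemma~\ref{le:isomorphism}) to replace $S_0$ by a single irreducible factor $S$ whose exponential zero set is still $T$. To promote polynomial irreducibility of $S$ to irreducibility of the real algebraic set $\ZZ(S) \subset \Real^{2n}$, one restricts to the ${\mathbb K}$-irreducible component of $\ZZ(S)$ of dimension $2n-1$ that projects dominantly onto $T_{n-1}$ along $\pi$, and it is this component that one actually feeds into Lemma~\ref{le:structure}.
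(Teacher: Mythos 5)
Your skeleton is essentially the paper's own proof: present $V$ as $\ZZ(E(\tilde P))$ for a defining polynomial of $V$, so that $\ZZ(\tilde P)=V\times\Real^n$ is irreducible, take an $(n-1)$-dimensional irreducible component $T\subsetneq V$, set $B:=V$, and apply Lemma~\ref{le:structure}. The only real difference is how the branch $T_{n-1}\subset W_P$ is excluded: the paper keeps a general $P$ and kills this branch by analytic continuation ($T_{n-1}$ would contain a hyperplane, so the irreducible algebraic set $V$ would be that hyperplane, and again $T=V$), whereas you arrange $W_{\tilde P}=\emptyset$. That is acceptable in substance, with one small caveat: for a polynomial with at most one monomial in $\U$ the paper declares $W_P$ \emph{undefined}, so Lemma~\ref{le:structure} does not literally cover your $\tilde P$; you should either check that its proof goes through with the convention $W_{\tilde P}=\emptyset$ (it does --- Lemma~\ref{le:rational} is vacuous for such $P$), or simply keep the paper's hyperplane argument, which costs nothing.

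The genuine gap is in your last paragraph, where you try to discharge the hypothesis of Lemma~\ref{le:structure} that $T=\ZZ(E(S))$ with $\ZZ(S)\subset\Real^{2n}$ an \emph{irreducible algebraic set}. The first step (using irreducibility of $T$ in the exponential Zariski topology to replace $S_0$ by a single ${\mathbb K}$-irreducible polynomial factor $S$ with $\ZZ(E(S))=T$) is fine. But irreducibility of $S$ as a polynomial does not give irreducibility of the real algebraic set $\ZZ(S)$ (for instance $U_1^2+X_1^2(X_1-1)^2$ is irreducible over $\Q$ while its real zero set is a disjoint union of two affine subspaces), and the proposed fix --- passing to a $(2n-1)$-dimensional ${\mathbb K}$-irreducible component of $\ZZ(S)$ that ``projects dominantly onto $T_{n-1}$'' --- fails on two counts. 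First, such a component need not exist: the only part of $\ZZ(S)$ tied to $T$ is $\ZZ(S)\cap\ZZ(\U-e^{\X})$, an $(n-1)$-dimensional set which may sit inside a component of dimension smaller than $2n-1$, and how components project to the $\X$-space does not detect which of them carries these points. Second, and decisively, Lemma~\ref{le:structure} uses the polynomial $S$ through the identity $T=\ZZ(E(S))$: Lemma~\ref{le:algebr1} needs $S(\x,e^{\x})=0$ for \emph{every} $\x\in T$, and the containment $A\times\Real^n\subset\ZZ(S)$ it produces says nothing about a chosen component $C$. If $S'$ is a polynomial defining $C$, you only get $\ZZ(E(S'))\subset T$, possibly strictly; the inclusion $T\subset\ZZ(E(S'))$ that the lemma's mechanism requires can fail, and $W_{S'}$ is a different set. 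So the substitution does not restore the hypothesis. (To be fair, the paper's own proof of the corollary invokes Lemma~\ref{le:structure} without verifying this hypothesis either --- it is simply built into the lemma --- so your main argument is at the paper's level of rigour; but the specific mechanism you offer for the ``main obstacle'' you yourself identified does not work and would need a different idea.)
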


\begin{proof}
Let $T$ be an $(n-1)$-dimensional irreducible exponential component of $V$.
Then by Lemma~\ref{le:structure}, either $T= V$, or $T_{n-1} \subset W_P$.
In the former case we are done.
In the latter case, by analytic continuation, $T_{n-1}$ contains a hyperplane.
Therefore, $V$ also contains this hyperplane, moreover, being an irreducible algebraic set, coincides with this hyperplane.
It follows that $T= V$.
\end{proof}

\section{Proof of the main theorem}\label{sec:main_theorem}

Schanuel's conjecture over real numbers is the following statement.
\bigskip

{\em Suppose that for real numbers $x_1, \ldots , x_n$ the transcendence degree
$${\rm td}_{\mathbb Q}(x_1, \ldots ,x_n,e^{x_1}, \ldots ,e^{x_n}) <n.$$
Then there are integers $m_1, \ldots ,m_n$, not all zero, such that $m_1x_1 + \cdots + m_nx_n=0$.}
\bigskip

This statement (along with its other versions) is the central, yet unsettled, conjecture in transcendental number theory
(see \cite{Lang, Kirby}).

Throughout this section we will assume that for $P \in {\mathbb K}[\X,\U]$ the real algebraic set
 $\ZZ(P)\subset \Real^{2n}$ is irreducible, and that for $f=E(P)$
 the exponential set $V:=\ZZ(f) \subset \Real^{n}$, is a hypersurface, i.e., $\dim (V)=n-1$.
 The case $n=1$ is covered in Section~\ref{sec:one_exp}, so in the sequel assume that $n>1$.

\begin{lemma}\label{le:alternative}
Assuming Schanuel's conjecture, every $(n-1)$-dimensional irreducible component of $V$ either coincides with $V$
($V$ is irreducible), or it is the finite union of hyperplanes through the origin, defined over $\Q$,
and a set of points having local dimension less than $n-1$.
\end{lemma}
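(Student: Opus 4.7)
The plan is to verify the hypothesis of Lemma~\ref{le:structure} for each $(n-1)$-dimensional irreducible exponential component $T$ of $V$, and then invoke that lemma. Precisely, what is needed is an $(n-1)$-dimensional real algebraic set $B\subset\Real^n$ over $\mathbb{K}$ that contains $T_{n-1}$; once this is secured, Lemma~\ref{le:structure} produces either $T=V$ or $T_{n-1}\subset W_P$. The second alternative feeds directly into the desired structural conclusion, since $W_P$ is by its construction a finite union of $\Q$-rational hyperplanes through the origin.

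To obtain $B$, consider the real algebraic Zariski closure $B_0$ of $T$ in $\Real^n$ over $\mathbb{K}$. Because $\dim T=n-1$, we have $\dim B_0\ge n-1$. If $\dim B_0\le n-1$, the $(n-1)$-equidimensional part of $B_0$ serves as $B$. The difficult case is $\dim B_0=n$, i.e.\ $T$ is Zariski dense in $\Real^n$; here the plan is to use Schanuel to force $T=V$, contradicting the need for a proper $B$. Since $\mathbb{K}\subset\Real_{alg}$ is countable, so is $\mathbb{K}[\X]$. A Baire-category (or Hausdorff-measure) argument on the $(n-1)$-dimensional real analytic manifold $T_{n-1}$ then produces a point $\x\in T_{n-1}$ with $x_1,\dots,x_n$ algebraically independent over $\Q$. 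In particular they are $\Q$-linearly independent, so Schanuel yields $\mathrm{td}_\Q(x_1,\dots,x_n,e^{x_1},\dots,e^{x_n})\ge n$.

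Let $\tilde T\subset\Real^{2n}$ denote the real algebraic Zariski closure over $\mathbb{K}$ of the graph $\{(\y,e^\y):\y\in T\}$; this is an algebraic subset of $\ZZ(P)$ containing the point $(\x,e^\x)$. The irreducible $\mathbb{K}$-algebraic subvariety cut out by the vanishing ideal of $(\x,e^\x)$ is contained in $\tilde T$ and has dimension $\mathrm{td}_\Q(\x,e^\x)\ge n$. The key step is to combine this Schanuel-derived lower bound with the codimension-one structure of $T$ inside $V$ to conclude that $\tilde T$ must equal $\ZZ(P)$; once this is shown, every $E$-polynomial $h=E(H)\in I(T)$ has $H$ vanishing on $\tilde T=\ZZ(P)$, hence on the graph $\{(\y,e^\y):\y\in V\}$, so $h\in I(V)$, giving $I(T)\subseteq I(V)$ and thus $T=V$ as required.

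Once $T_{n-1}\subset W_P$ is secured, the structural description follows routinely: $W_P$ is by construction a finite union of subspaces $\{(d_{1\nu}-d_{1\mu})X_1+\cdots+(d_{n\nu}-d_{n\mu})X_n=0\}$ with integer coefficients, hence $\Q$-rational hyperplanes through the origin. Since $T_{n-1}$ is a real analytic $(n-1)$-submanifold of $\Real^n$ sitting inside this finite union, each of its connected components lies openly in one such hyperplane; by analytic continuation and the irreducibility of $T$, those full hyperplanes are contained in $T$. The residual set $T\setminus T_{n-1}$ consists of points of local dimension strictly less than $n-1$, which Theorem~\ref{th:subset} packages as a lower-dimensional exponential subset. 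The main obstacle is the Schanuel step in paragraph three: passing from the pointwise lower bound $\mathrm{td}_\Q(\x,e^\x)\ge n$ at a single generic algebraically independent point to the global conclusion $\tilde T=\ZZ(P)$ requires a careful interplay between Schanuel and the algebra of the full defining ideal of $T$ pulled back to $\mathbb{K}[\X,\U]$, since the naive codimension count alone leaves $\mathrm{td}\le 2n-2$ and does not immediately contradict the Schanuel bound $\ge n$ when $n\ge 2$.
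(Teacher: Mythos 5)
There is a genuine gap, and you flag it yourself: the hard case is never closed. In your dichotomy, when no proper $\mathbb K$-algebraic set of dimension $n-1$ contains $T_{n-1}$, your argument stops at a lower bound. Moreover the Schanuel input there is both vacuous and pointed in the wrong direction: once you pick $\x\in T_{n-1}$ with $x_1,\dots,x_n$ algebraically independent over $\Q$, the inequality ${\rm td}_{\Q}(\x,e^{\x})\ge n$ is trivial (no conjecture needed), and it only gives $\dim\tilde T\ge n$, which comes nowhere near forcing $\tilde T=\ZZ(P)$, since $\dim\ZZ(P)$ may be as large as $2n-1$. Even the existence of such a point $\x$ is not justified: Baire category only shows that some rational hypersurface contains a relatively open piece of $T_{n-1}$, and since an irreducible exponential set need not be analytically irreducible, this neither propagates to all of $T_{n-1}$ nor contradicts Zariski density of $T$. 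So the entire "difficult case" of your plan is an unproved assertion.

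The paper's proof runs Schanuel in the opposite direction, with an \emph{upper} bound on transcendence degree obtained by slicing. Since $T\neq V$ and $\ZZ(P)$ is irreducible, $\ell:=\dim\ZZ(P,S)<\dim\ZZ(P)\le 2n-1$, so $\ell\le 2n-2$. Projecting the $(n-1)$-dimensional set $\ZZ(S,U_1-e^{X_1},\dots,U_n-e^{X_n})$ to a suitable set of $n-1$ coordinates $X_j$, $j\neq\alpha$, and cutting by generic algebraic values $x_j$ of these coordinates, chosen linearly independent over $\Q$, one finds a point $(\x,e^{\x})$ lying in an algebraic set defined over $\Real_{alg}$ of dimension at most $\ell-(n-1)\le n-1<n$; hence ${\rm td}_{\Q}(x_1,\dots,x_n,e^{x_1},\dots,e^{x_n})<n$, and Schanuel yields a rational linear relation, which (by the linear independence of the algebraic $x_j$, $j\neq\alpha$) forces $x_\alpha$, and so all coordinates of $\x$, to be algebraic. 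Lemma~\ref{le:rational} (Lindemann--Weierstrass) then places $\x$ in the explicit algebraic set $B$ consisting of the coefficient loci $\{A_\nu=0\}$ of $P$ together with $W_P$, contradicting $\x\in T\setminus B$; this kills the case $\dim(T\setminus B)=n-1$, and the remaining case $T_{n-1}\subset B$ is exactly where Lemma~\ref{le:structure} is applied (note that $B$ must include the loci $\{A_\nu=0\}$, not just $W_P$, for the Lindemann step to work). Your proposal is missing both the slicing device that makes Schanuel applicable and the Lindemann endgame, so as it stands it does not prove the lemma.
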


\begin{proof}
Suppose $V$ is reducible and $T$ is its irreducible component having dimension $n-1$.
Then $T= \ZZ(g) \subset \Real^n$ for a suitable $E$-polynomial $g$ such that $g=E(S)$, where $S \in {\mathbb K}[\X, \U]$.

Let $\dim \ZZ(P)=m$ for some $n-1 \le m \le 2n-1$, and $\dim (\ZZ(P, S))= \ell$ for some $n-1 \le \ell$.
Observe that $\ell <m$, otherwise $\ZZ(P) \subset \ZZ(S)$ since $\ZZ(P)$ is irreducible, which contradicts
the existence of components of $V$ different from $T$.
In particular, $n \le m$ and $\ell \le 2n-2$.

The projection of the $(n-1)$-dimensional set
$$\ZZ(P, S, U_1-e^{X_1},\ldots , U_n-e^{X_n})=\ZZ(S, U_1-e^{X_1}, \ldots , U_n-e^{X_n})$$
to a coordinate subspace of some $n-1$ coordinates $X_1, \ldots ,X_{\alpha-1},X_{\alpha+1}, \ldots , X_n$,
where $1 \le \alpha \le n$, is $(n-1)$-dimensional.
Consider any such $\alpha$.
Then the projection contains a dense (in this projection) set of points
$(x_1, \ldots ,x_{\alpha -1},x_{\alpha +1}, \ldots ,x_n) \in \Real_{alg}^{n-1}$
and for each such point the intersection
$$\ZZ(S, P, X_1-x_1, \ldots ,X_{\alpha -1}-x_{\alpha -1}, X_{\alpha +1}-x_{\alpha +1}, \ldots , X_n-x_n )$$
is an {\em algebraic set defined over $\mathbb K$}.

Observe that the set of points $(x_1, \ldots ,x_{\alpha -1},x_{\alpha +1}, \ldots ,x_n)$ such that the dimension
$$\dim (\ZZ( S, P, X_1-x_1, \ldots ,X_{\alpha -1}-x_{\alpha -1}, X_{\alpha +1}-x_{\alpha +1}, \ldots , X_n-x_n ))$$
is larger than $\ell -n+1$ is a semialgebraic set in $\Real^{n-1}$ having dimension less than $n-1$.
Hence, for a dense subset of algebraic points $(x_1, \ldots ,x_{\alpha -1},x_{\alpha +1}, \ldots ,x_n)$ in $\Real^{n-1}$
the dimension of the algebraic set
$$\ZZ(S, P, X_1-x_1, \ldots ,X_{\alpha -1}-x_{\alpha -1}, X_{\alpha +1}-x_{\alpha +1}, \ldots , X_n-x_n )$$
is at most $\ell -n+1$, i.e., at most $n-1$.

Represent $P$ as a polynomial in $\U$ with coefficients in ${\mathbb K} [\X]$.
Every monomial is then of the kind
$$A_\nu U_{1}^{d_{1 \nu}} \cdots U_{n}^{d_{n \nu}},$$
with $A_\nu \in {\mathbb K}[\X]$, $d_{j \nu} \ge 0$.
Consider the real algebraic set
$$
B:= \bigcup_\nu \{A_\nu=0 \} \cup
\bigcup_{\nu,\mu} \{ d_{1 \nu}X_{1}+ \cdots + d_{n \nu}X_{n} = d_{1 \mu}X_{1}+ \cdots + d_{n \mu}X_{n} \},
$$
where the first union is taken over all monomials, while the second union is taken over all pairs of different monomials.

Suppose first that $\dim (T \setminus B)< n-1$. Then $T_{n-1} \subset B$.
By Lemma~\ref{le:structure}, either $V= T$ or $T_{n-1} \subset W_P$.
The first of these alternatives contradicts the reducibility of $V$, hence, $T_{n-1}$ is a union of rational hyperplanes
through the origin, and the lemma is proved.

Suppose now that $\dim (T \setminus B)= n-1$.
Then there exists a number $\alpha,\ 1 \le \alpha \le n$, a point
$(x_1, \ldots ,x_{\alpha -1}, x_{\alpha +1}, \ldots ,x_n) \in \Real_{alg}^{n-1}$,
and a number $x_\alpha \in \Real$ such that
\begin{enumerate}
\item
$(x_1, \ldots ,x_n) \in T \setminus B$;
\item
the numbers $x_j$, where $j \in \{1, \ldots, \alpha -1,\alpha +1, \ldots ,n \}$, are linearly independent over $\Q$;
\item
the dimension of
$$\ZZ( S, P, X_1-x_1, \ldots ,X_{\alpha -1}-x_{\alpha -1}, X_{\alpha +1}-x_{\alpha +1}, \ldots , X_n-x_n )$$
is at most $n-1$.
\end{enumerate}
Let $(x_1, \ldots ,x_{\alpha -1}, x_{\alpha +1}, \ldots ,x_n)$ be such point.
Then the set
$$\ZZ( S, U_1-e^{X_1}, \ldots , U_n-e^{X_n} ) \subset \Real^{2n}$$
contains a point, namely, $(x_1, \ldots , x_n, e^{x_1}, \ldots , e^{x_n})$, which also lies in an algebraic set
$$\ZZ( S,\ P, X_1-x_1, \ldots ,X_{\alpha -1}-x_{\alpha -1}, X_{\alpha +1}-x_{\alpha +1}, \ldots , X_n-x_n )$$
of dimension at most $n-1$.
By Schanuel's conjecture, $m_1x_1+ \cdots +  m_nx_n=0$ for some integers $m_1, \ldots , m_n$, not all equal to 0.
Since $(x_1, \ldots ,x_{\alpha -1}, x_{\alpha +1}, \ldots ,x_n)$ are algebraic numbers, linearly independent over $\Q$,
we have $m_\alpha \neq 0$, hence, $x_\alpha$ is also algebraic.

Thus, the point $(x_1, \ldots ,x_n)$ has real algebraic coordinates.
Then $(x_1, \ldots ,x_n) \in B$, either because all coefficients $A_\nu$ vanish (hence the polynomial
$P(x_1, \ldots ,x_n, \U)$ is identically zero with respect to $\U$), or otherwise,
by Lemma~\ref{le:rational}, since
$$(e^{x_1}, \ldots , e^{x_n}) \in \ZZ(S(x_1, \ldots ,x_n, \U)).$$
This contradicts condition (1).
It follows that components $T$, with the property
$$\dim (T \setminus B)= n-1,$$
do not exist.
\end{proof}

\begin{remark}
In the proof of Lemma~\ref{le:alternative} the following implication of Schanuel's conjecture was actually used,
rather than Schanuel's conjecture {\em per se}.
If numbers
$$x_1, \ldots ,x_{\alpha -1}, x_{\alpha +1}, \ldots ,x_n \in \Real_{alg}$$
are linearly independent over $\Q$, $x_\alpha \in \Real$, and the transcendence degree of
$$x_1, \ldots ,x_n, e^{x_1}, \ldots , e^{x_n}$$
is less than $n$, then $x_\alpha \in \Real_{alg}$.
It is not known whether this particular case of Schanuel's conjecture is true.
As M. Waldschmidt pointed out \cite{waldpriv}, this particular case implies, for $n=2$,
that $e$ and $\log 2$ are algebraically independent, which is not known.
\end{remark}

\begin{proof}[Proof of Theorem~\ref{thm:main}]
Suppose that $V:=\ZZ(f)$ is reducible and $T:=\ZZ(g)$ is its irreducible component.
Then, according to Lemma~\ref{le:alternative}, $T$ is the  union of the set $T^{(1)}$ of rational hyperplanes through zero,
and a set $T^{(2)}$ of points of some local dimensions less than $n-1$.
Suppose that $T^{(2)} \neq \emptyset$, and the maximum of these dimensions is $p <n-1$.
According to Theorem~\ref{th:subset}, there is an exponential set $T^{(3)}$ such that $T^{(2)} \subset T^{(3)} \subset T$
and $\dim T^{(3)}=p$.
If follows that $T=T^{(1)} \cup T^{(3)}$, hence, $T$ is reducible which is a contradiction.
Therefore, $T^{(2)} =\emptyset$ and $T=T^{(1)}$.
Since $T$ is irreducible, the set $T^{(1)}$ consists of a unique hyperplane.
\end{proof}

Recall the definition of the ring ${\mathbb K}[\X, e^{\X_k}]$, for $0 \le k \le n$, in Remark~\ref{re:less_exp}.
Assuming Schanuel's conjecture, the following statement is a generalization of Corollary~\ref{cor:algebr}.

\begin{corollary}
Let $P \in {\mathbb K}[\X, \U_k]$ and assume that $\ZZ (P) \in \Real^{n+k}$ is irreducible.
Let $f:=E(P) \in {\mathbb K}[\X, e^{\X_k}]$ and $\ZZ (f)$ be an $(n-1)$-dimensional exponential set, irreducible in
${\mathbb K}[\X, e^{\X_k}]$.
Then, assuming Schanuel's conjecture, $\ZZ (f)$ is irreducible.
\end{corollary}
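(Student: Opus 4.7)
The plan is to deduce the corollary from Theorem~\ref{thm:main} by extending $P$ to the full ring ${\mathbb K}[\X,\U]$. First, I would regard $P \in {\mathbb K}[\X,\U_k]$ as an element of ${\mathbb K}[\X,\U]$; its zero set in $\Real^{2n}$ is then $\ZZ(P)\times\Real^{n-k}$, still irreducible over ${\mathbb K}$ because the product of an irreducible real algebraic set with $\Real^{n-k}$ is again irreducible (any algebraic decomposition of the product projects fiberwise to a decomposition of $\ZZ(P)$, since each fiber $\{y\}\times\Real^{n-k}$ is itself irreducible). Applying Theorem~\ref{thm:main} to this extension yields the dichotomy: either $\ZZ(f)$ is already irreducible in the ${\mathbb K}[\X,e^{\X}]$-topology (the desired conclusion), or $\ZZ(f)$ is reducible there and every $(n-1)$-dimensional irreducible component is a rational hyperplane through the origin.

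The main work is to rule out the second alternative, by showing it forces $\ZZ(f)$ to be reducible already in the ${\mathbb K}[\X,e^{\X_k}]$-topology, contradicting the hypothesis. Denote the $(n-1)$-dimensional finer-components by $L_1,\ldots,L_p$ (with $p\ge 1$) and set $L = L_1\cup\cdots\cup L_p$. Since $L$ is algebraic over ${\mathbb K}$ it is closed in the ${\mathbb K}[\X,e^{\X_k}]$-topology; moreover $L$ coincides with the locus of points of local dimension $n-1$, because such a point must lie in some $(n-1)$-dimensional component, which is one of the $L_i$. If $V = L$, then reducibility forces $p\ge 2$ and $V = L_1 \cup (L_2\cup\cdots\cup L_p)$ is a decomposition into two proper ${\mathbb K}[\X,e^{\X_k}]$-closed subsets. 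If $V \ne L$, then $V\setminus L$ consists of points of local dimension strictly less than $n-1$; letting $p'$ denote this maximum, Theorem~\ref{th:subset}, extended to the ${\mathbb K}[\X,e^{\X_k}]$-topology via Remark~\ref{re:less_exp}, yields a ${\mathbb K}[\X,e^{\X_k}]$-closed set $W \subset V$ of dimension $p'$ containing every $V_\ell$ with $\ell \le p'$, and in particular containing $V \setminus L$. Hence $V = L \cup W$ with $L \subsetneq V$ (since $V \ne L$) and $W \subsetneq V$ (by dimension), giving the required coarser decomposition.

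The subtle point is that the individual lower-dimensional finer-irreducible components of $\ZZ(f)$ may have very large ${\mathbb K}[\X,e^{\X_k}]$-closures --- for instance, a transcendental point isolated in the finer topology only through dependencies involving $e^{X_{k+1}},\ldots,e^{X_n}$ may have coarser Zariski closure stretching to an entire hyperplane, so naively taking the ${\mathbb K}[\X,e^{\X_k}]$-closure of $V \setminus L$ could a priori recover all of $V$ and fail to produce a proper subset. Theorem~\ref{th:subset} is precisely the tool that sidesteps this obstacle: rather than building $W$ as a closure, it constructs a ${\mathbb K}[\X,e^{\X_k}]$-closed set that absorbs the lower-local-dimension points while having dimension $p' < n-1$, so $W$ is automatically a proper subset of $V$ and the decomposition is non-trivial.
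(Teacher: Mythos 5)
Your proposal is correct and follows essentially the same route as the paper: apply Theorem~\ref{thm:main} (after noting that the cylinder over $\ZZ(P)$ in $\Real^{2n}$ is irreducible), then use Theorem~\ref{th:subset} with Remark~\ref{re:less_exp}, together with the fact that rational hyperplanes are closed in the ${\mathbb K}[\X,e^{\X_k}]$-topology, to exhibit a nontrivial decomposition of $\ZZ(f)$ in the coarser topology, contradicting the hypothesis. The only difference is that you spell out details the paper leaves implicit (irreducibility of $\ZZ(P)\times\Real^{n-k}$, the case split $V=L$ versus $V\neq L$, and properness of both pieces), which is a faithful elaboration rather than a different argument.
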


\begin{proof}
Let $\ZZ (f)$ be reducible.
By Theorem~\ref{thm:main}, all $(n-1)$-dimensional irreducible components of $\ZZ (f)$ are rational hyperplanes,
while by Theorem~\ref{th:subset} and Remark~\ref{re:less_exp}, the union of all the rest of irreducible components
is an exponential set defined in ${\mathbb K}[\X, e^{\X_k}]$.
Since any rational hyperplane is an irreducible set defined in ${\mathbb K}[\X, e^{\X_k}]$, we conclude that
$\ZZ (f)$ is also reducible in ${\mathbb K}[\X, e^{\X_k}]$.
\end{proof}

\section{Open questions}

1.\quad 
Let $\dim (\ZZ (f))=m \le n-1$, the algebraic set $\ZZ (P) \subset \Real^{2n}$ be irreducible, and $\dim (\ZZ (P))=k$.
Obviously, $m \le k$.
Assume that $k \le m+n$.
(In the case of a single exponential, in Section~\ref{sec:one_exp}, the condition $k \le m+1$ was achieved by
introducing an admissible set.)
A weak conjecture is that, under Schanuel's conjecture, either $\ZZ (f)$ is irreducible or every $m$-dimensional
irreducible component of $\ZZ (f)$ is contained in a rational hyperplane through the origin.
\medskip

2.\quad 
We conjecture that there is an upper bound $(cd)^n$ on the number of all irreducible components of $\ZZ (f)$,
similar to the bound in Corollary~\ref{cor:bezout}.
Here $c$ is an absolute constant and $d= \deg (P)$.
\medskip

3.\quad 
It would be interesting to understand the structure of absolutely irreducible components of $\ZZ (f)$
(i.e., irreducible over $\Real$).
Example~\ref{ex:irred} shows that Theorem~\ref{thm:main} is no more true in this case.
\medskip

4.\quad Let $P \in {\mathbb K}[\X, U_1]$, $\z \in \ZZ (P, U_1-e^{X_1}) \setminus \ZZ (X_1)$, and
${\mathcal U}$ be a neighbourhood of $\z$ in $\Real^{n+1}$.
Set $\ZZ (P) \cap {\mathcal U}$ admits a Whitney stratification, while $\ZZ (U_1 -e^{x_1})$ is
a real analytic submanifold of $\Real^{n+1}$.

In Appendix, Theorem~\ref{th:transverse}, it is proved that the intersection of Whitney stratified sets,
$\ZZ (P) \cap {\mathcal U}$ and $\ZZ (U_1- e^{X_1}) \cap {\mathcal U}$, is transverse.
We conjecture that transversality remains true in the general case of many exponentials.

\section{Appendix}

In this section we prove a transversality property for $E$-polynomials depending on a single exponential.

The following statement is well known to experts but we could not find an exact reference to it in literature.

\begin{proposition}\label{prop:alg_strata}
Let ${\mathcal X} \subset \Real^n$ be an intersection of an algebraic set and an open set.
Then there is a Whitney stratification of $\mathcal X$ (with connected strata) such that for each stratum
$S$ there is an open set $\mathcal U$ containing $S$ such that $S$ coincides with the intersection of
$\mathcal U$ with an algebraic set.
Moreover, if the algebraic set in $\mathcal X$ is defined over $\Real_{alg}$ then the algebraic set in $S$
is defined over $\Real_{alg}$.
\end{proposition}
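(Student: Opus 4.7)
The plan is to construct a Whitney stratification of the ambient algebraic set $Y$ (where $\mathcal{X}=Y\cap\mathcal{O}$) by iterated singular loci, then restrict to $\mathcal{X}$ and refine by connected components.

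First I would build a descending chain $Y=Y_0\supsetneq Y_1\supsetneq\cdots\supsetneq Y_N=\emptyset$ of algebraic subsets such that each set-difference $Y_i\setminus Y_{i+1}$ is a real analytic submanifold of $\Real^n$ along which Whitney's conditions (a) and (b) hold with respect to every stratum of lower dimension. Here $Y_{i+1}$ is obtained from $Y_i$ by adjoining to $\mathrm{Sing}(Y_i)$ a suitable algebraic subset chosen so as to enforce the Whitney conditions on $Y_i\setminus Y_{i+1}$. The existence of such a filtration --- the canonical Whitney stratification of an algebraic set --- goes back to Whitney, with further developments in the semialgebraic setting due to {\L}ojasiewicz; a modern treatment can be found in \cite{BR, BCR}.

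Next I would take as strata of $\mathcal{X}$ the connected components of the sets $(Y_i\setminus Y_{i+1})\cap\mathcal{O}$. Conditions (a) and (b) are local and invariant under restriction to open subsets of strata, so this refined partition is still a Whitney stratification. Let $S$ be such a connected component. Because $Y_i\setminus Y_{i+1}$ is open in $Y_i$ and $\mathcal{O}$ is open in $\Real^n$, the set $(Y_i\setminus Y_{i+1})\cap\mathcal{O}$ is open in $Y_i$; since algebraic sets are locally connected, its connected component $S$ is also open in $Y_i$. Hence by the definition of the subspace topology there is an open set $\mathcal{U}'\subset\Real^n$ with $\mathcal{U}'\cap Y_i=S$, exhibiting $S$ as the intersection of the algebraic set $Y_i$ with the open set $\mathcal{U}'$. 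Local finiteness of the partition follows because every point of $\mathcal{X}$ has an open neighbourhood inside $\mathcal{O}$ meeting only finitely many strata of $Y$.

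For the assertion about $\Real_{alg}$, I would note that each ingredient of the construction --- taking singular loci, adjoining the algebraic Whitney correction, and decomposing a semialgebraic set into connected components --- can be carried out by first-order semialgebraic operations. Since $\Real_{alg}$ is real closed, Tarski--Seidenberg ensures that the algebraic sets $Y_i$ and the components $S$ remain defined over $\Real_{alg}$. The main obstacle in this plan is justifying the existence of the Whitney filtration itself, which rests on the non-trivial theorem that conditions (a) and (b) hold along an open dense algebraic subset of the smooth locus of any algebraic set; I would invoke this from the literature rather than reprove it.
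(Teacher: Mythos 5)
Your argument is correct in substance, but it follows a genuinely different route from the paper. The paper complexifies: it takes the Zariski closure $\widehat{\mathcal X}\subset\C^n$, invokes Teissier's theorem \cite{Teiss} to get a Whitney stratification of $\widehat{\mathcal X}$ with Zariski locally closed strata, and then takes connected components of the real parts; for the $\Real_{alg}$ statement it does not track the construction at all, but instead cites Rannou \cite{Rannou} to express the existence of such a stratification as a first-order formula over real closed fields and applies the transfer principle \cite[Proposition~5.2.3]{BCR}. You stay entirely in the real category, building the canonical filtration $Y=Y_0\supset Y_1\supset\cdots$ by iterated singular loci together with the algebraic ``Whitney-failure'' loci, and you get the $\Real_{alg}$ clause for free by observing that each step of the construction (singular locus, failure locus, Zariski closure) is definable over the field of the coefficients. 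Your route buys a more self-contained and field-sensitive construction (the defining field is preserved step by step, with no detour through $\C^n$ or through model theory), at the price of invoking the nontrivial genericity theorem that Whitney (a)/(b) fail only on a lower-dimensional set; the paper's route buys a one-line reduction to two strong published theorems, at the price of opacity about how the real strata and their fields of definition arise from the complex ones. Your observation that a connected component $S$ of $(Y_i\setminus Y_{i+1})\cap\mathcal O$ is open in $Y_i$, hence of the form $\mathcal U'\cap Y_i$, is exactly what the statement needs, and note that only the $Y_i$ (not $S$ itself, which meets the arbitrary open set $\mathcal O$) must be defined over $\Real_{alg}$, so your definability argument suffices.

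Two small points deserve tightening. First, the references \cite{BR,BCR} give Whitney stratifications with \emph{semialgebraic} strata; to obtain an \emph{algebraic} filtration as you use it, add the remark that the semialgebraic locus where the Whitney conditions fail has lower dimension, hence so does its Zariski closure (the dimension of a semialgebraic set equals that of its Zariski closure), and it is this Zariski closure that you adjoin to ${\rm Sing}\,(Y_i)$. Second, your local-finiteness sentence is not quite a proof as stated, since a neighbourhood meeting one stratum of $Y$ could a priori meet infinitely many connected components of its intersection with $\mathcal O$; take the neighbourhood to be an open ball $B\subset\mathcal O$ and use that $(Y_i\setminus Y_{i+1})\cap B$ is semialgebraic and hence has finitely many connected components, each lying in a single stratum of $\mathcal X$. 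With these adjustments (and, if your definition of Whitney stratification includes the frontier condition, the standard fact that (b)-regularity with connected strata implies it), the proof is complete.
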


\begin{proof}
Let $\widehat {\mathcal X} \subset \C^n$ be the complexification of $\mathcal X$
(i.e., the Zariski closure of $\mathcal X$ in $\C^n$).
Teissier's theorem \cite[Ch. VI, Proposition 3.1]{Teiss}
implies that $\widehat {\mathcal X}$ admits a Whitney stratification,
with each stratum being a Zariski locally closed set in $\C^n$.
By taking connected components of real parts of strata, this stratification induces the required
Whitney stratification on $\mathcal X$.

To prove the second statement of the proposition, observe that, according to \cite{Rannou}, the existence of
the required Whitney stratification for a fixed $\mathcal X$ can be expressed by a formula of the
first-order theory of real closed fields.
Now the statement follows from the {\em transfer principle} in real closed fields \cite[Proposition~5.2.3]{BCR}.
\end{proof}

Let $P \in {\mathbb K}[\X, U_1]$, $f=E(P)$, $V= \ZZ (f)$, and $\pi:\> \Real^{n+1} \to \Real^{n}$ be
the projection map along $U_1$.

\begin{theorem}\label{th:transverse}
Assume that for some $0 \le p \le \dim (V)$ there is a point $\x=(x_1, \ldots x_n) \in V_p$ with $x_1\neq 0$.
Let $\z \in \ZZ(P, U_1-e^{X_1} )$ be such that $\pi (\z)= \x$.
Then we have $\dim_{\z} (\ZZ(P))=p+1$.
\end{theorem}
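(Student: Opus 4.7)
I would prove $\dim_{\z}\ZZ(P)=p+1$ by establishing both inequalities. The common observation is that $\pi$ restricted to $\ZZ(U_1-e^{X_1})$ is a real-analytic diffeomorphism onto $\R^n$, so locally near $\z$,
\[
\dim_{\z}\bigl(\ZZ(P)\cap\ZZ(U_1-e^{X_1})\bigr)=\dim_{\x}V=p.
\]
Hence the theorem reduces to showing that intersecting the real algebraic set $\ZZ(P)$ with the smooth real-analytic hypersurface $\ZZ(U_1-e^{X_1})$ at $\z$ drops the local dimension by exactly one.

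For the lower bound $\dim_{\z}\ZZ(P)\ge p+1$, I argue by contradiction. Assume $\dim_{\z}\ZZ(P)=p$; by Proposition~\ref{prop:alg_strata}, Whitney-stratify $\ZZ(P)$ near $\z$ with strata defined over $\R_{alg}$, and observe that some top-dimensional stratum closure $\bar S$---irreducible algebraic over $\R_{alg}$ of dimension $p$---must then be locally contained in $\ZZ(U_1-e^{X_1})$ at $\z$. For any analytic curve $\gamma\subset\bar S$ through $\z$ with $\gamma_1$ nonconstant, the identity $\gamma_{n+1}(t)=e^{\gamma_1(t)}$ combined with $\gamma_{n+1}$ being algebraic over $\C(\gamma_1)$ (since $\bar S$ is algebraic) contradicts the transcendence of $e^z$ over $\C(z)$. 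Therefore $X_1\equiv x_1$ on $\bar S$, and similarly $U_1\equiv e^{x_1}$. Because $\bar S$ is defined over $\R_{alg}$, the constants $x_1$ and $e^{x_1}$ lie in $\R_{alg}$, contradicting Lindemann--Weierstrass since $x_1\ne 0$.

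For the upper bound $\dim_{\z}\ZZ(P)\le p+1$, let $m=\dim_{\z}\ZZ(P)$ and pick a top-dimensional stratum closure $\bar S$ with $\z\in\bar S$. I claim transversality at $\z$: $T_{\z}\bar S\not\subset\ker(dU_1-e^{x_1}dX_1)$. Writing this as an algebraic condition on the conormal of $\bar S$ and viewing $e^{x_1}$ as the coordinate $U_1$, a tangency at $\z$ translates into polynomial equations $F_i(\x,e^{x_1})=0$ with $F_i\in\R_{alg}[\X,U_1]$. By the single-exponential form of Lemma~\ref{le:rational} (where $W_{F_i}=\{X_1=0\}$), each either forces $x_1=0$---contradicting the hypothesis---or vanishes identically in $U_1$ on $\bar S$, which by Zariski density through the irreducible $\bar S$ amounts to global tangency and reduces to the lower-bound contradiction. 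Transversality then yields $\dim_{\z}(\bar S\cap\ZZ(U_1-e^{X_1}))=m-1$; projecting back via $\pi$ gives $p\ge m-1$ and hence $m\le p+1$.

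The main obstacle is the transversality step in the upper bound: Lemma~\ref{le:rational} is a statement about algebraic points $\x\in\R_{alg}^n$, whereas the $\x$ of our hypothesis can be transcendental. The key is to propagate the pointwise tangency at $\z$ to a Zariski-closed condition on $\bar S$ (using irreducibility of $\bar S$ over $\R_{alg}$), so that the Lindemann--Weierstrass obstruction applies to the algebraic data of $\bar S$ rather than to $\x$ directly. Handling the case where $\z$ lies on a lower-dimensional boundary stratum rather than in the top stratum itself requires replacing the tangent space by the appropriate tangent cone.
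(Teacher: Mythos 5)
There is a genuine gap, and it sits exactly where you flag it yourself: the transversality step. At the point $\z$ the coordinates are in general transcendental, so the tangency conditions $F_i(\x,e^{x_1})=0$ with $F_i\in\Real_{alg}[\X,U_1]$ give you nothing: Lemma~\ref{le:rational} only applies to $\x\in\Real_{alg}^n$, and a nonzero polynomial with algebraic coefficients can perfectly well vanish at a single transcendental point of the graph of $\exp$ (e.g.\ $F=U_1-X_2$ at any point with $x_2=e^{x_1}$) without forcing $x_1=0$ and without vanishing identically on $\bar S$. Your proposed repair --- ``propagate the pointwise tangency at $\z$ to a Zariski-closed condition on $\bar S$ using irreducibility over $\Real_{alg}$'' --- is not a proof: the tangency locus is indeed an algebraic subset of $\bar S$, but membership of one point in it does not make it all of $\bar S$, and if it is a proper subset you learn nothing about $\z$. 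The paper closes precisely this hole by a mechanism absent from your proposal: it replaces $e^{X_1}$ by $U_1$ in the tangency determinant to get a polynomial $\widehat D$, shows $\widehat D$ cannot vanish identically on the stratum near $\z$ (identical vanishing would, via the implicit function theorem, produce a function satisfying $dG/dX_1=-G$, i.e.\ a semialgebraic germ of the graph of $e^{-X_1}$, contradicting transcendence of $\exp$), and then runs an induction on the dimension of the stratum: the locus $\{\widehat D=0\}\cap S$ through $\z$ has strictly smaller dimension, and the descent terminates at $0$-dimensional strata, whose points are algebraic so Lindemann applies (this is where $x_1\neq 0$ enters, unconditionally). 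Your sketch has neither the integration/ODE argument ruling out identical tangency (your phrase ``amounts to global tangency and reduces to the lower-bound contradiction'' skips exactly this: global tangency to the distribution $dU_1=U_1\,dX_1$ only gives containment in an exponential graph after an integration step) nor the dimension descent that makes the point eventually algebraic.

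Two further points. In the lower bound, the claim that $\gamma_{n+1}$ is ``algebraic over $\C(\gamma_1)$ since $\bar S$ is algebraic'' is false as stated: an analytic curve inside a $p$-dimensional algebraic set with $p\ge 2$ need satisfy no polynomial relation between two chosen coordinates unless the projection of $\bar S$ to the $(X_1,U_1)$-plane fails to be dominant; the correct route (as in the paper's spirit) is that containment of an open piece of $S$ in $\ZZ(U_1-e^{X_1})$ makes a semialgebraic set land inside the exponential graph, which is impossible on any arc with $X_1$ nonconstant, and the constant case uses definability over $\Real_{alg}$. Finally, the case where $\z$ lies only in a boundary stratum is not a matter of ``replacing the tangent space by the tangent cone'': the paper needs Whitney regularity of the stratification from Proposition~\ref{prop:alg_strata} together with Trotman's theorem \cite{Tr} (Whitney $(a)$ implies $(t)$-regularity), plus a two-sides/curve argument, to transfer transversality from the small stratum $S$ to the adjacent higher-dimensional strata $R$ and to ensure those strata actually meet $\ZZ(U_1-e^{X_1})$ near $\z$; this ingredient is missing from your outline and the dimension count $\dim_{\z}\ZZ(P)=p+1$ does not follow without it.
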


\begin{proof}
Let $\mathcal U$ be a neighbourhood  of $\z$ in $\Real^{n+1}$.
By Proposition~\ref{prop:alg_strata} there exists a  Whitney stratification of  $\ZZ(P) \cap {\mathcal U}$.
Let $S$ be a stratum of this stratification containing $\z$, and let $S$ be an intersection of an
algebraic set $S'$ and an open set in $\Real^{n+1}$.

Note that $S$ and $\ZZ(U_1-e^{X_1})$ are real analytic submanifolds of $\Real^{n+1}$.

To begin, we prove the following claim.

{\bf Claim:}\quad The manifolds $S$ and $\ZZ( U_1-e^{X_1})$ cannot be tangent at $\z$.

This claim implies that if $\dim (S)>0$, then $S$ and $\ZZ( U_1-e^{X_1})$ are transverse at $\z$ in $\Real^{n+1}$.

To verify this  claim we proceed by  induction on $\dim S$.
Since any algebraic point in $\ZZ(U_1-e^{X_1})$ will require $x_1=0$, the base case of the induction,
with $\dim (S)=0$, is immediate.

For the induction step  assume that  $\dim (S)=n-k+1$ for some $1 \le k < n+1$.
Then, we can deduce from Lemma \ref{le:3.2.9} the existence of a neighbourhood $\mathcal V$ of $\z$ in $\Real^{n+1}$ such that
$$\mathcal V \cap S= \mathcal V \cap \ZZ(P_1,\cdots,P_k),$$ where
all $P_i$ are polynomials in $I(S')$, and the Jacobian $(k \times (n+1))$-matrix of the system
$P_1= \cdots =P_k=0$ has the maximal rank $k$ at $\z$.
Now, assume that $S$ and $\ZZ(U_1-e^{X_1})$  intersect tangentially at $\z$.
Then all $(k+1) \times (k+1)$-minors of the Jacobian $(k+1) \times (n+1)$-matrix
$$\frac{ \partial (U-e^{X_1}, P_1, \ldots , P_k)}{\partial(U,X_1, \ldots , X_n)}$$
vanish at $\z$.
In particular, all of the following minors vanish:
\begin{equation}\label{eq:jacob1}
 \frac{\partial (U-e^{X_1}, P_1, \ldots , P_k)}{\partial (U, X_1, X_{i_1}, \ldots ,X_{i_{k-1}})}
\end{equation}
for all subsets $\{ i_1, \ldots ,i_{k-1} \} \subset \{2, \ldots ,n \}$.

Also, if $k <n$, all of the following minors vanish:
\begin{equation}\label{eq:jacob2}
\frac{\partial(P_1, \ldots ,P_k)}{\partial(X_{i_1}, \ldots ,X_{i_k})}
\end{equation}
for all subsets $\{ i_1, \ldots ,i_k \} \subset \{2, \ldots ,n \}$.

Clearly, the  determinant of   (\ref{eq:jacob1}) equals
$$D (U,X_1, \ldots ,X_n)= \det \frac{\partial (P_1, \ldots, P_k)}{\partial (X_1,X_{i_1}, \ldots ,X_{i_{k-1}})}
- e^{X_1} \det \frac{ \partial (P_1, \ldots ,P_k)}{\partial (U, X_{i_1}, \ldots , X_{i_{k-1}})}.$$
Define $\widehat{D}$ by replacing $e^{X_1}$ by $U$ in $D$.
Then $\widehat D(\z)= D(\z)$.

Observe that
$$
A(U, X_1, \ldots ,X_n):=\det \frac{\partial (P_1, \ldots ,P_k)}{\partial (U,X_{i_1}, \ldots ,X_{i_{k-1}})} \neq 0
$$
at $\z$ for some subset $\{ i_1, \ldots ,i_{k-1} \} \subset \{2, \ldots ,n \}$.
Indeed, otherwise for all subsets $\{ i_1, \ldots ,i_{k-1} \}$ the condition $D(U,X_1, \ldots ,X_n)=0$ would imply that
$$B(U, X_1, \ldots ,X_n):=\det \frac{\partial (P_1, \ldots, P_k)}{\partial (X_1,X_{i_1}, \ldots ,X_{i_{k-1}})} =0$$
at $\z$.
Hence, all $k \times k$-minors for the system $P_1= \cdots =P_k=0$ vanish at $\z$, taking into the account that
all minors (\ref{eq:jacob2}) vanish at $\z$ when $k<n$.
This contradicts the supposition that the Jacobian matrix of the system has the maximal rank at $\z$.

We conclude that $A(U, X_1, \ldots ,X_n) \neq 0$ at $\z$ for some subset $\{ i_1, \ldots ,i_{k-1} \} \subset \{2, \ldots ,n \}$.
Fix such a subset $\{ i_1, \ldots ,i_{k-1} \} \subset \{2, \ldots ,n \}$.
Then we can consider $P_1= \cdots =P_k=0$ as an implicit map $F=(F_1,F_{i_1}, \ldots , F_{i_{k-1}})$
from the  vector space of variables
$X_1, X_{j_1}, \ldots ,X_{j_{n-k}}$ to the vector space of variables $U, X_{i_1}, \ldots ,X_{i_{k-1}}$, with
$$\{ j_1, \ldots ,j_{n-k} \}= \{2, \ldots , n\} \setminus \{i_1, \ldots , i_{k-1} \}.$$
In particular, there is a differentiable function $F_1(X_1, X_{j_1}, \ldots ,X_{j_{n-k}})=U$, whose partial derivative
with respect to $X_1$ in the neighbourhood of $\z$ is given, according to formulae for differentiating of implicit functions, by
$$\frac{\partial F_1}{\partial X_1}(X_1, X_{j_1}, \ldots ,X_{j_{n-k}})= - \frac{B(U, X_1, \ldots ,X_n)}
{A(U, X_1, \ldots ,X_n)}.$$

Suppose that $\widehat D$ vanishes identically in the neighbourhood of $\z$ in $S$.
Then, in the neighbourhood,
$$U= \frac{B(U, X_1, \ldots ,X_n)}{A(U, X_1, \ldots ,X_n)},$$
and therefore,
$$\frac{\partial F_1}{\partial X_1}(X_1, X_{j_1}, \ldots ,X_{j_{n-k}})=- F_1(X_1, X_{j_1}, \ldots ,X_{j_{n-k}}).$$
Let $G$ be the restriction of $F_1$ to the straight line $\ZZ( X_{j_1}-x_{j_1}, \ldots , X_{j_{n-k}}-x_{j_{n-k}})$.
Then $G$ satisfies the differential equation $dG/dX_1=-G$, hence $G(X_1)=e^{-X_1}$.
Since $\ZZ(G(X_1)-U )$ is a semialgebraic curve at $\z$, we get a contradiction.
Therefore, $\widehat D$ does not vanish identically in the neighbourhood of $\z$ in $S$.

It follows that $\dim_{\z} (\widehat D \cap S) <n-k+1$.
The set $\widehat D \cap S$ is either smooth at $\z$, or $\z$ is its singular point.
In the first case, $T_{\z}(\widehat D \cap S) \subset T_{\z}(S)$, hence $\widehat D \cap S$ is
tangent to $\ZZ(U-e^{X_1})$ at $\z$, which is impossible by the inductive hypothesis.
In the second case, $\z$ belongs to a stratum of a smooth stratification of $\widehat D \cap S$, which has even
smaller dimension than $\dim_{\z} (\widehat D \cap S)$.
This is again impossible by the inductive hypothesis. Therefore, $S$ and $\ZZ( U-e^{X_1} )$
do not meet tangentially at $\z$.
The claim is proved.

To finish the proof of the theorem, we can assume that $S$ is transverse to $\ZZ(U-e^{X_1})$ at $\z$.
Let $R$ be any other stratum of the stratification such that $S \subset \overline R$.
Since $\ZZ (U-e^{X_1} )$ is an oriented hypersurface in $\Real^n$, there are two points ${\bf a}, {\bf b} \in S$
on different sides of $\ZZ(U-e^{X_1})$.
There is an open curve interval $\gamma \subset R$ such that ${\bf a}, {\bf b} \in \overline \gamma$.
Then $\gamma \cap \ZZ(U-e^{X_1}) \neq \emptyset$, thus $\ZZ( U-e^{X_1} ) \cap R \neq \emptyset$.
Since, by \cite{Tr}, Whitney's $(a)$-regularity implies Thom's $(t)$-regularity, the manifolds $\ZZ(U-e^{X_1})$ and $R$ are
transverse in a neighbourhood of $\z$.
But $\dim_{\z} (\ZZ(P, U-e^{X_1}))=p$, while $\dim (\ZZ( U-e^{X_1}))=n-1$.
It follows that $\dim_{\z} (\ZZ(P))=p+1$.
\end{proof}

\section*{Acknowledgements}
We would like to  thank L. Birbrair, N. Dutertre, and D. Trotman for communicating to us a proof of the first part of
Proposition \ref{prop:alg_strata}.
Part of the research presented in this article was carried out during visits of the second author to the Aalto Science Institute in 2015/16.
We are very grateful for financial support for these visits from the  Aalto Visiting Fellows programme.

\end{document}